\documentclass[a4paper]{amsart}
\usepackage[utf8]{inputenc}
\usepackage{amsthm}
\usepackage{amsmath}
\usepackage{amssymb}
\usepackage{mathrsfs}

\usepackage[top=3cm, bottom=3.5cm, left=3cm, right=3cm]{geometry}
\usepackage{comment}

\usepackage{graphicx}
\graphicspath{ {./images/} }

\usepackage{mathtools}

\usepackage{tikz}
\usepackage{tikz-3dplot}
\usetikzlibrary{3d}

\usepackage{algorithm}
\usepackage{algpseudocode}

\usepackage[graphicx]{realboxes}
\usepackage{caption}

\usepackage[plainpages=false,pdfpagelabels,colorlinks=true,citecolor=blue,hypertexnames=false]{hyperref}
\usepackage[capitalise]{cleveref}

\newtheorem{thm}{Theorem}
\newtheorem{prop}[thm]{Proposition}
\newtheorem{conj}[thm]{Conjecture}
\newtheorem{lem}[thm]{Lemma}
\newtheorem{dfn}[thm]{Definition}

\renewcommand{\phi}{\varphi}
\renewcommand{\epsilon}{\varepsilon}

\theoremstyle{remark}

\title{On Nieuwland Numbers and Polar Duality}
\author{Kavin Satheeskumar\, and \, Liam Benoit}

\thanks{\textit{Markham, Ontario}, \textit{Canada}. \href{mailto:ksathees@uwaterloo.ca}{ksathees@uwaterloo.ca}} 

\thanks{\textit{Kelowna, British Columbia}, \textit{Canada}. \href{mailto:tionebl@student.ubc.ca}{tionebl@student.ubc.ca}}

\date{\today}

\begin{document}

\begin{abstract}
	\noindent
	A convex 3D-polytope is said to have Rupert's property if it can pass through a copy of itself. The Nieuwland number of a convex polytope $P$ is the largest $\nu \in \mathbb{R}^+$ such that $\nu P$ can pass through $P$. We reduce showing $P$ passes through $Q$ to a feasibility problem over a quadratic constraint set. Using this, we prove that the Nieuwland number of the octahedron is $\frac{3\sqrt2}{4}$ and that the computation of the Nieuwland number of a convex polytope can be reduced to polynomially many semialgebraic optimization problems in a fixed number of variables.
\end{abstract}

\keywords{convex polytope, Rupert's property, Nieuwland numbers}

\subjclass{52B10, 52B55}

\maketitle

\section{Introduction} \label{sec:intro}

More than 300 years ago, Prince Rupert was the first to show that it was possible to bore a hole into a unit cube that allows an identical unit cube to pass through. In 1816, the Dutch scientist Pieter Nieuwland showed that a slightly larger cube with side length $\frac{3\sqrt{2}}{4}$ could pass through a unit cube \cite{BeGuHuJo21}. This is the largest side length for which a cube can pass through a unit cube. This constant would later become known as the Nieuwland number of the cube.\\\\
We say that a convex polytope $ P \subset \mathbb{R}^3$ has Rupert's property if, like the cube, it is possible to bore a hole into it such that an identical copy of the original shape can pass through. Similarly, we define the Nieuwland number of $P$ as the largest $\nu \in \mathbb{R}^+$ such that $\nu P$ can pass through $P$. These definitions are rigorously stated in \ref{subsec:definitions}.\\\\
Over the centuries, various other solids were shown to have Rupert's property, including the octahedron and tetrahedron in 1968 \cite{Scriba68}. In the last 10 years, there has been an explosion of progress as computational methods have been applied to the problem (\cite{Fe23}, \cite{StYu23} and \cite{StYu25}). As such, better and better estimates of the Nieuwland number of various convex solids were discovered.\\\\
Despite recent advances in the computation of Nieuwland numbers, the cube remains the only shape with a non-trivial Nieuwland number whose Nieuwland number is known exactly. There are also very few theorems which prove anything about Nieuwland numbers in general or the relationships between them. However, numerical efforts have allowed for various conjectures to be posed. The most notable of these is the following:

\begin{conj}[Steininger - Yurkevich \cite{StYu23}] \label{conj:cube_octo}
	The cube and the octahedron have the same Nieuwland number.
\end{conj}
\noindent
In this paper, we prove this conjecture true by proving Theorem \ref{thm:oct_nieuwland}. In Section \ref{sec:preliminaries} we introduce the necessary definitions and concepts. In particular, we introduce Definition \ref{def:pass_through} that gives us our definition of the Nieuwland number. We then prove that it is equivalent to the definition of the Nieuwland number used in \cite{StYu23}. 
\\\\
In Section \ref{sec:dual_pass_through_condition} we develop a condition on polytopes $P$ and $Q$ with $P$ passing through $Q$. When this condition is met, it can be shown that $Q^*$ passes through $P^*$, where $P^*$ and $Q^*$ are the polar duals of $P$ and $Q$ respectively. We show that this condition is met when $Q$ is an octahedron and $P=\nu Q$ for any $\nu$ such that $P$ passes through $Q$. Similarly, this condition is satisfied when $Q$ is the cube and $P=\frac{3\sqrt2}{4}Q$. Combining these two results gives a proof of Theorem \ref{thm:oct_nieuwland}. 
\\\\
In Section \ref{sec:main_algorithm}, we use the ideas from the previous two sections to develop algorithm \ref{alg:Nieuwland}, a polynomial time algorithm for computing Nieuwland numbers. We do this by proving Theorem \ref{thm:pass_through_simplified}, which gives us another alternative equivalent condition for $P$ passing through $Q$. This condition, along with Lemma \ref{lem:hyperplanes}, naturally produces a polynomial time algorithm.\\\\
In Section \ref{sec:conclusion}, we discuss potential problems for future research.
\\
\subsection{Acknowledgments}
\hfill \\
The authors thank the University of Waterloo, Professor Ricardo Fukasawa, and Professor Daniel Bienstock for providing us with the computational resources, advice, and opinions needed to complete this paper. Moreover, the authors also thank Kareem Alfarra and Alex Pawelko for their feedback on previous drafts as well as Professor Andriy Prymak for his feedback and endorsement for submission to arXiv. Finally, the authors thank Thomas Walter Murphy VII for originally bringing this problem to their attention.

\section{Preliminaries} \label{sec:preliminaries}
For the remainder of this paper, we will let $P$ and $Q$ be convex polytopes in $\mathbb{R}^3$, $U\in SO(3),\eta\in\mathbb{R}^3,\delta\in\mathbb{R}^3$. We work over $\mathbb{R}^3$ with the standard topology.
\\
\subsection{Halfspace Representations and Polar Duality}
\hfill \\
Convex polytopes are, by definition, the convex hull of finitely many points with non-empty interior. Any convex polytope can also be represented as the intersection of finitely many halfspaces. We can write $P=\bigcap_{i=1}^n\{x\in\mathbb{R}^3: a_i^Tx\leq b_i\}$ which is often abbreviated to $P=\{Ax\leq b\}$, where the $i$th row of $A$ is $a_i^T$ and $b=(b_1,...,b_n)^T$. When $0\in \textrm{int}(P)$ we have $b_i>0$. We can then write 
$$\{x\in\mathbb{R}^3: \forall 1 \leq i \leq n, \quad a_i^Tx\leq b_i \}$$  
$$=\left\{x\in\mathbb{R}^3: \forall 1 \leq i \leq n, \quad \left(\frac{1}{b_i}a_i^T \right)x\leq 1 \right\}.$$
Setting $a_i'^T=\frac{1}{b_i}a_i^T$ gives $P=\{A'x\leq 1\}$. This form is especially important when dealing with polar duals.
\\\\
In convex geometry, the polar dual of a convex polytope is another convex polytope, whose faces are the vertices of the original and whose vertices are the faces of the original. Formally, if $0\in \textrm{int}(P)$ we define the polar dual
$$P^*=\{x\in\mathbb{R}^3:x^Ty\leq1,\forall y\in P\}.$$
This transformation has many nice properties, such as $P^*$ being a convex polytope with $0\in \textrm{int}(P^*)$ so long as $P$ is a convex polytope with $0\in \textrm{int}(P)$. The fact that the polar dual swaps the vertices and faces can be stated formally as follows: $P=\{Ax\leq1\}=\textrm{conv}\{v_1,...,v_n\}$ if and only if $P^*=\textrm{conv}\{a_1,...,a_m\}=\{Vx\leq1\}$, where $a_i^T$ is the $i$th row of $A$ and $v_k^T$ is the $k$th row of $V$. From this it can be shown that $P^{**}=P$. Proofs of these facts can be found in \cite{Ga07}.
\\\\
Other properties of the dual that will be used in this paper include:
\begin{itemize}
	\item $P\subseteq Q$ if and only if $Q^*\subseteq P^*$.
	\item For $c \in \mathbb{R}$ with $c > 0$, $(cP)^*=\frac{1}{c}P^*$.
	\item If $P$ is point symmetric (i.e. $P=-P$), then so is $P^*$.
\end{itemize}
\hfill
\subsection{Rupert's Property and Nieuwland Numbers} \label{subsec:definitions}
\hfill \\
In order to define Rupert's property and Nieuwland numbers rigorously, a definition of ``$P$ passes through $Q$'' is needed. There are many competing but equivalent definitions for $P$ passing through $Q$; we will use the following.
\begin{dfn} \label{def:pass_through}
	$P$ \textbf{passes through} $Q$ if there exists $(U,\eta,\delta)$ such that $$\textup{proj}_{\eta}(UP+\delta)\subseteq \textup{proj}_{\eta}(Q),$$where $\textup{proj}_{\eta}$ is the orthogonal projection onto the hyperplane orthogonal to $\eta$. When $\eta=0$ we let $\textup{proj}_\eta$ be the identity. We say $P$ passes through $Q$ with \textbf{certificate} $(U,\eta,\delta)$ if $\textup{proj}_{\eta}(UP+\delta)\subseteq \textup{proj}_{\eta}(Q)$.\\
\end{dfn}
\noindent
The following definition is equivalent to \ref{def:pass_through} and will be used interchangeably throughout the paper.
\begin{prop} \label{proposition:equivalent_pass_through_definition}
	Let $P=\textup{conv}\{v_1,...,v_n\}$ and $Q=\{Ax\leq b\}$. $P$ passes through $Q$ with certificate $(U,\eta,\delta)$ if and only if there exist $z_1,...,z_n\in\mathbb{R}$ such that
	$$\forall 1\leq k\leq n, \quad A(Uv_k+z_k\eta+\delta)\leq b.$$
\end{prop}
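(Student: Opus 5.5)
The plan is to unwind the definition of the orthogonal projection and use convexity to reduce containment of sets to containment of finitely many points. The key observation is that for $\eta\neq 0$, two points have the same image under $\textup{proj}_\eta$ exactly when they differ by a multiple of $\eta$. Hence, for any set $S$,
$$\textup{proj}_\eta^{-1}(\textup{proj}_\eta(S))=S+\mathbb{R}\eta .$$
Consequently, a point $x$ satisfies $\textup{proj}_\eta(x)\in\textup{proj}_\eta(Q)$ if and only if there is $z\in\mathbb{R}$ with $x+z\eta\in Q$, that is, $A(x+z\eta)\leq b$. When $\eta=0$ the projection is the identity, and the same statement holds trivially, since the choice of $z$ is irrelevant. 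So both cases can be handled uniformly.

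For the forward direction, assume $\textup{proj}_\eta(UP+\delta)\subseteq\textup{proj}_\eta(Q)$. Each vertex $v_k$ gives a point $Uv_k+\delta\in UP+\delta$, so its projection lies in $\textup{proj}_\eta(Q)$. By the observation above, there is some $z_k$ with $A(Uv_k+z_k\eta+\delta)\leq b$, which is exactly the required system.

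For the converse, suppose such $z_1,\dots,z_n$ exist, and take an arbitrary point $Ux+\delta$ with $x\in P$. Write $x=\sum_k\lambda_k v_k$ as a convex combination, with $\lambda_k\geq 0$ and $\sum_k\lambda_k=1$. Set $z=\sum_k\lambda_k z_k$. Then
$$Ux+z\eta+\delta=\sum_k\lambda_k\,(Uv_k+z_k\eta+\delta),$$
which uses linearity of $U$ and $\sum_k\lambda_k=1$ to absorb $\delta$. This is a convex combination of points of $Q$, so it lies in $Q$ because $Q=\{Ax\leq b\}$ is convex. Applying $\textup{proj}_\eta$, which kills the $z\eta$ term, gives $\textup{proj}_\eta(Ux+\delta)\in\textup{proj}_\eta(Q)$. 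Since $x\in P$ was arbitrary, this establishes the containment.

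There is no serious obstacle here. The only point needing care is the fibre description of $\textup{proj}_\eta$: the kernel of the orthogonal projection onto $\eta^\perp$ is exactly $\mathbb{R}\eta$. That, together with the degenerate case $\eta=0$, is what makes the argument work, and the rest is linearity plus convexity.
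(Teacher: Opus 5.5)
Your proof is correct and follows the same route as the paper. In the forward direction you lift each projected vertex back along the fibre $\mathbb{R}\eta$, and in the converse you use the convex combination $\sum_k\lambda_k z_k$ of the lifts, exactly as the paper does. The only cosmetic difference is that you invoke $\ker\textup{proj}_\eta=\mathbb{R}\eta$ directly, whereas the paper first normalizes $\|\eta\|\in\{0,1\}$ and computes with the explicit formula $\textup{proj}_\eta(x)=x-(x^T\eta)\eta$.
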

\begin{proof}
	Let $P, Q$ be as in the statement of the proposition. We note that if $P$ passes through $Q$ with certificate $(U, \eta, \delta)$ we can assume that $\|\eta\|\in\{0,1\}$ since $\textrm{proj}_\eta=\textrm{proj}_{r\eta}$ for all $r>0$. Similarly, if $\forall 1\leq k\leq n, \quad A(Uv_k+z_k\eta+\delta)\leq b$ we can assume $\|\eta\|\in\{0,1\}$ by taking $\eta'=\frac{\eta}{\|\eta\|}$ and $z_k'=\|\eta\|z_k$ when $\|\eta\|>0$. This assumption allows us to write $\textrm{proj}_\eta(x)=x-(x^T\eta)\eta$.
	\\\\
	$(\Longrightarrow)$
	\\\\
	Since $P$ passes through $Q$ there exists $(U,\eta,\delta)$ such that $\textrm{proj}_\eta(UP+\delta)\subseteq \textrm{proj}_\eta(Q)$. Then, for all $1\leq k\leq n$, there exists $x\in Q$ such that
	$$\textrm{proj}_\eta(Uv_k + \delta)=\textrm{proj}_\eta(x).$$
    Recall that $\textrm{proj}_\eta(x)=x-(x^T\eta)\eta$. This gives us
	$$(Uv_k+\delta)-((Uv_k+\delta)^T\eta)\eta=x-(x^T\eta)\eta,$$
	$$(Uv_k+\delta)-((Uv_k+\delta-x)^T\eta)\eta=x.$$
	Let $z_k=-(Uv_k+\delta-x)^T\eta$. Then $Uv_k+\delta+z_k\eta=x\in Q=\{Ax \leq b\}$. So, 
	$$A(Uv_k+z_k\eta+\delta)\leq b.$$
    As needed.
    \\\\
	$(\Longleftarrow)$
	\\\\
    We have $\forall 1 \leq k \leq n, \quad A(Uv_k+z_k \eta + \delta) \leq b$ and we want to show that $\textup{proj}_{\eta}(UP+\delta)\subseteq \textup{proj}_{\eta}(Q)$. We let $y\in \textrm{proj}_\eta(UP+\delta)$ be arbitrary and show that it is always possible to find some $C$ such that $y+C\eta \in Q$.\\\\
	Let $y\in \textrm{proj}_\eta(UP+\delta)$. Then, for some $x\in P, y=(Ux+\delta)-((Ux+\delta)^T\eta)\eta$. Since $x\in P,x=c_1v_1+...+c_nv_n$ for some $c_1,...,c_n\geq0$ with $c_1+...+c_n=1$. Let $z=(Ux+\delta)^T\eta$. Then,
	$$A\left(Ux+\delta+\sum_{k=1}^nc_kz_k\eta\right)=\sum_{k=1}^nc_kA(Uv_k+z_k\eta+\delta) \leq \sum_{k=1}^nc_kb=b.$$
	Thus, 
	$$y+\left(\sum_{k=1}^nc_kz_k+z\right)\eta=Ux+\delta+\sum_{k=1}^nc_kz_k\eta\in Q.$$
	Finally,
	$$y=\textrm{proj}_\eta\left(y+\left(\sum_{k=1}^nc_kz_k+z\right)\eta\right)$$ so
	$y\in \textrm{proj}_\eta(Q)$ as needed.
\end{proof}
\noindent
With a definition of what it means for $P$ to pass through $Q$ we can now define Rupert's property and Nieuwland numbers. 
\begin{dfn}\label{dfn:Nieuwland 1}
	$P$ has \textbf{Rupert's property} if $\nu P$ passes through $P$ for some $\nu>1$. The \textbf{Nieuwland number} of $P$, denoted $\nu_P$, is the supremum over all $\nu\geq1$ such that $\nu P$ passes through $P$.
\end{dfn}
\noindent
Below we show that we can replace supremum with the word maximum in the above definition of the Nieuwland number.
\begin{lem}\label{lem:max_number}
Let $\nu_P$ be the Nieuwland number of $P$. Then, $\nu_P P$ passes through $P$.
\begin{proof}
Let $\nu_n$ be a sequence converging to $\nu_P$ such that $\nu_n P$ passes through $P$ with certificate $(U_n,\eta_n,\delta_n)$. Since $P$ is compact, there exists $M>0$ such that $\|x\|\leq M$ for all $x\in P$. As in the proof of Proposition \ref{proposition:equivalent_pass_through_definition}, we may assume $\|\eta_n\|\in\{0,1\}$ for all $n\in\mathbb{N}$. 
\\\\
We start with the case where there exists a subsequence of $\eta_n$ converging to $0$ (i.e. $\|\eta_n\|\to0$). WLOG, we will assume that the sequence $\eta_n$ converges to $0$. Since $\|\eta\|\in\{0,1\}$ this means $\eta_n$ is eventually $0$. Again, by reducing to a subsequence, we can assume $\eta_n=0$ for all $n\in\mathbb{N}$. This gives $U_n\nu_nP+\delta_n\subseteq P$. Since $\det(U)=1$ and $U_n\nu_nP+\delta_n\subseteq P$ we have $$\text{vol}(U_n\nu_nP+\delta_n)=\nu_n^3\text{vol}(P)\leq \text{vol}(P),$$ where vol is the standard measure on $\mathbb{R}^3$. This implies $\nu_n \leq 1$. Setting $\nu=1, U=Id, \delta=0, \eta=0$ we have $P\subseteq P$. Thus, $\nu_n\to \nu_P\leq 1$ and $\nu_P \geq 1$. So, $\nu_P=1$ and $\nu_P P$ passes through $P$ as desired in this case.
\\\\
In the case where no such subsequence exists, it must be the case that $\|\eta_n\|=1$ eventually. By reducing to a subsequence, we can assume $\|\eta_n\|=1$ for all $n\in \mathbb{N}$. Since $S^2$ is compact, we can further assume that $\eta_n$ converges to $\eta$. We note that $\textrm{proj}_{\eta_n}(U_n\nu_nP+\delta_n)=\textrm{proj}_{\eta_n}(U_n\nu_nP+\textrm{proj}_{\eta_n}(\delta_n))$. So, we can assume $\langle \eta_n,\delta_n\rangle=0$ for all $n\in\mathbb{N}$. This gives $\textrm{proj}_{\eta_n}(U_n\nu_nP+\delta_n)=\textrm{proj}_{\eta_n}(U_n\nu_nP)+\delta_n$. Note that projections are non-expansive, specifically $\|\textrm{proj}_{\eta_n}(x)\|\leq \|x\|$. So, $y\in \textrm{proj}_{\eta_n}(P)$ implies $\|y\|\leq M$. Since $$\textrm{proj}_{\eta_n}(U_n\nu_nP)+\delta_n\subseteq \textrm{proj}_{\eta_n}(U_n\nu_nP+\delta_n)\subseteq \textrm{proj}_{\eta_n}(P),$$ $x\in P$ implies, $$\|\textrm{proj}_{\eta_n}(U_n\nu_nx)+\delta_n\|\leq M.$$ By the reverse triangle inequality,
$$\|\delta_n\|\leq \|\textrm{proj}_{\eta_n}(U_n\nu_nx)+\delta_n\| +\|\textrm{proj}_{\eta_n}(U_n\nu_nx)\|\leq (1+\nu_n)M\leq (1+\nu_P)M.$$ As in the first case, the $\delta_n$ are bounded and $SO(3)$ is compact, so we may assume $\delta_n\to\delta, U_n\to U$.
\\\\
Since $\textrm{proj}_{\eta}(U\nu P+\delta)$ and $\textrm{proj}_{\eta}(P)$ are both convex, it suffices to prove that the vertices of $\textrm{proj}_{\eta}(U\nu P+\delta)$ can be written as a convex combination of the vertices of $\textrm{proj}_{\eta}(P)$. The vertices of $\textrm{proj}_{\eta}(U\nu P+\delta)$ (resp. $\textrm{proj}_{\eta}(P)$) are a subset of $\{\textrm{proj}_{\eta}(U\nu v_i+\delta): \forall 1\leq i\leq m\}$ (resp. $\{\textrm{proj}_{\eta}(v_i): \forall 1\leq i\leq m\}$, where $\{v_i:\forall 1\leq i \leq m\}$ is the set of vertices of $P$. Thus, it suffices to show that all $\{\textrm{proj}_{\eta}(U\nu v_i+\delta): \forall 1\leq i\leq m\}$ can be written as a convex combination of $\{\textrm{proj}_{\eta}(v_i): \forall 1\leq i\leq m\}$.
\\\\
Fix $1\leq j\leq m$. Since $\textrm{proj}_{\eta_n}(U_n\nu_nP+\delta_n)\subseteq \textrm{proj}_{\eta_n}(P)$ we can find $c_{1,n},...,c_{m,n}\geq0, c_{1,n}+...+c_{m,n}=1$ such that $$\textrm{proj}_{\eta_n}(U_n\nu_nv_j+\delta_n)=c_{1,n}\textrm{proj}_{\eta_n}(v_1)+...+c_{m,n}\textrm{proj}_{\eta_n}(v_m).$$ By reducing to a subsequence, we can assume $c_{i,n}\to c_i$ for each $1\leq i \leq m$. This maintains the property that $c_1,...,c_m\geq 0, c_1+...+c_m=1$. Finally, by taking limits on each side we obtain $$\textrm{proj}_{\eta}(U\nu_Pv_j+\delta)=c_1\textrm{proj}_{\eta}(v_1)+...+c_m\textrm{proj}_{\eta}(v_m).$$ Thus, $\textrm{proj}_{\eta}(U\nu_PP+\delta)\subseteq \textrm{proj}_\eta(P)$ as desired.
\end{proof}
\end{lem}
\noindent
These definitions may differ from others in the literature. In \cite{StYu23}, Steininger and Yurkevich provide a definition of Rupert's property and Nieuwland numbers using spherical coordinates. This is useful in their context as it allows them to reduce the dimension of the space they are working over when calculating Nieuwland numbers.
\begin{dfn}
	Let $X(\theta,\phi)=(\cos(\theta)\sin(\phi),\sin(\theta)\sin(\phi),\cos(\phi))$ for $(\theta,\phi)\in[0,2\pi)\times[0,\pi]$ be a parametrization of the 2-sphere. The projection onto the plane orthogonal to $X(\theta,\phi)$ is defined by 
		$$M_{\theta,\phi}=\begin{bmatrix}
		-\sin(\theta) & \cos(\theta) & 0\\
		-\cos(\theta)\cos(\phi) & -\sin(\theta)\cos(\phi) & \sin(\phi)
		\end{bmatrix}.
		$$
	The translation $T_{x,y}:\mathbb{R}^2\to\mathbb{R}^2$ is defined by $$T_{x,y}(a,b)^T=(a+x,b+y)^T.$$
	Finally, for $\alpha\in [0,2\pi)$ the rotation by $\alpha$ is defined as 
        $$R_\alpha =\begin{bmatrix}
        \cos(\alpha) & -\sin(\alpha)\\
        \sin(\alpha) & \cos(\alpha)
        \end{bmatrix}.
        $$
\end{dfn}
\begin{dfn} \label{dfn: Nieuwland 2}
    The \textbf{Nieuwland number} of $P$, denoted $\nu_P$ is the supremum over all $\nu$ such that there exists $\theta_1,\theta_2,\alpha\in[0,2\pi),\phi_1,\phi_2\in[0,\pi]$, and $x,y\in\mathbb{R}$ such that
        $$(T_{x,y}\circ R_\alpha\circ M_{\theta_1,\phi_1})(\nu P)\subset \textup{int}(M_{\theta_2,\phi_2} P).$$
\end{dfn}
\noindent
It can be shown that both definitions of the Nieuwland number are equivalent. The proof is long and unenlightening so we save it for the Appendix (\ref{sec:appendix}).
\begin{prop} \label{prop: appendix}
	The definition of the Nieuwland number of $P$ given in Definition \ref{dfn:Nieuwland 1} is equivalent to the one given in Definition \ref{dfn: Nieuwland 2}.
\end{prop}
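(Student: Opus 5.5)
We will show that the two sets of admissible scalings agree up to their boundary, so that their suprema coincide. Let $S_1$ be the set of $\nu\ge 1$ for which $\nu P$ passes through $P$ in the sense of Definition \ref{def:pass_through}, and let $S_2$ be the set of $\nu$ admitting parameters $(\theta_1,\phi_1,\theta_2,\phi_2,\alpha,x,y)$ as in Definition \ref{dfn: Nieuwland 2}. We will prove two inclusions:
\begin{itemize}
\item every $\nu\in S_2$ lies in $S_1$;
\item for every $\nu\in S_1$ and every $\epsilon\in(0,1)$, the number $(1-\epsilon)\nu$ lies in $S_2$.
\end{itemize}
Together these give $\sup S_1=\sup S_2$. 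We also need a small check that $\sup S_2\ge 1$, or else that values below $1$ are irrelevant. This follows from the second inclusion, since $1\in S_1$ via the identity certificate.

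\textbf{The dictionary between the two definitions.} The rows of $M_{\theta,\phi}$ are orthonormal and orthogonal to $X(\theta,\phi)$. Hence $M_{\theta,\phi}$ is an isometry from $X(\theta,\phi)^\perp$ onto $\mathbb{R}^2$, composed with the orthogonal projection $\textup{proj}_{X(\theta,\phi)}$.

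Given a planar rigid motion $T_{x,y}\circ R_\alpha$, we build a rotation $U\in SO(3)$ carrying $X(\theta_1,\phi_1)$ to $X(\theta_2,\phi_2)$, and a vector $\delta\perp X(\theta_2,\phi_2)$, such that
$$M_{\theta_2,\phi_2}\bigl(\textup{proj}_{X(\theta_2,\phi_2)}(Uw+\delta)\bigr)=T_{x,y}R_\alpha M_{\theta_1,\phi_1}w \quad \text{for all } w.$$
Concretely, $U$ maps the orthonormal frame $(m_1,m_2,X_1)$ formed by the rows of $M_{\theta_1,\phi_1}$ and $X(\theta_1,\phi_1)$ to the frame obtained by rotating the rows of $M_{\theta_2,\phi_2}$ by $\alpha$ and appending $X(\theta_2,\phi_2)$. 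We must check orientation so that $\det U=1$; if needed we flip the sign of the third vector, which does not change the projection. The vector $\delta$ is the lift of $(x,y)$.

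Conversely, any certificate $(U,\eta,\delta)$ with $\|\eta\|=1$ yields such parameters. We choose $(\theta_2,\phi_2)$ with $X(\theta_2,\phi_2)=\eta$ and $(\theta_1,\phi_1)$ with $X(\theta_1,\phi_1)=U^{-1}\eta$. The map $M_{\theta_2,\phi_2}UM_{\theta_1,\phi_1}^T$ is then an orthogonal $2\times 2$ matrix, and it has determinant $+1$ after possibly replacing $(\theta_1,\phi_1)$ by the antipodal direction. So it equals some $R_\alpha$, and the translation is $M_{\theta_2,\phi_2}\delta$. Under this dictionary, the containment in Definition \ref{dfn: Nieuwland 2} (without the interior) is equivalent to the containment in Definition \ref{def:pass_through} for $\eta\ne0$.

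\textbf{The two inclusions.} The inclusion $S_2\subseteq S_1$ is immediate from the dictionary, since $\textup{int}(\cdot)\subseteq(\cdot)$.

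For the reverse direction, take $\nu\in S_1$ with certificate $(U,\eta,\delta)$.

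If $\eta=0$, then $\nu=1$ by the volume argument of Lemma \ref{lem:max_number}. In that case we simply pick any direction $\eta$, since $UP+\delta\subseteq P$ projects into the projection of $P$. So we may assume $\|\eta\|=1$.

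Now handle the interior. Translate so that the projected centroid $c$ of $\textup{proj}_\eta(P)$ is used as the centre of scaling; this is easier to phrase with Definition \ref{def:pass_through} via $\delta$. The projection $\textup{proj}_\eta(P)$ is a planar convex body with nonempty interior, because $P$ has nonempty interior. For convex $K$ with $c\in\textup{int}K$ we have
$$c+(1-\epsilon)(K-c)\subseteq \textup{int}K.$$
Shrinking the passing body by $1-\epsilon$ about the preimage of $c$ and adjusting $\delta$ therefore gives
$$\textup{proj}_\eta\bigl(U(1-\epsilon)\nu P+\delta'\bigr)\subseteq \textup{int}\,\textup{proj}_\eta(P),$$
and the dictionary converts this into membership of $(1-\epsilon)\nu$ in $S_2$.

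The main obstacle is the bookkeeping in the dictionary, in particular the orientation and determinant issue and the degenerate parameter values $\phi\in\{0,\pi\}$, where $M_{\theta,\phi}$ still has orthonormal rows but $\theta$ is redundant. The analytic content is only the interior-shrinking step.
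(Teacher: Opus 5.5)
Your proposal is correct and is essentially the paper's proof. Your dictionary ($X(\theta_2,\phi_2)=\eta$, $X(\theta_1,\phi_1)=U^{-1}\eta$, $R_\alpha=M_{\theta_2,\phi_2}UM_{\theta_1,\phi_1}^T$, translation $M_{\theta_2,\phi_2}\delta$) is exactly Lemma \ref{lem:pass_through_equiv}, and your interior step is the paper's homothety about an interior point. The differences are small and in your favour: you shrink every $\nu\in S_1$ rather than the attained maximum, so you do not need Lemma \ref{lem:max_number}, and you treat $\eta=0$, which the paper's lemma skips. Two minor corrections: the inclusion $S_2\subseteq S_1$ literally fails for $\nu<1$, since Definition \ref{dfn:Nieuwland 1} only admits $\nu\ge 1$, though this is harmless because $1\in S_1$; and your orientation hedges are vacuous, because $m_1\times m_2=X(\theta,\phi)$ forces both determinants to be $+1$ automatically.
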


\section{The Nieuwland Number of the Octahedron} \label{sec:dual_pass_through_condition}

Recall that the Nieuwland number of a polytope $P$ is the maximum $\nu$ so that $\nu P$ can pass through $P$. Consider the generalization of finding the largest $\nu$ such that $\nu P$ can pass through $Q$. Using \ref{proposition:equivalent_pass_through_definition}, we can phrase this as an optimization problem over the variables $(\nu, U, \eta, \delta)$. In Section \ref{subsec:dual_pass_through_condition} we give a condition that, given a feasible solution $(\nu,U,\eta,0)$ to the optimization problem corresponding to the pair $(P,Q)$, allows us to construct a feasible solution $(\nu,U^T,\eta',0)$ to the optimization problem corresponding to the pair $(Q^*,P^*)$. We then use this condition in Section \ref{subsec: numocta} to prove that the Nieuwland numbers of the cube and octahedron are the same.
\\
\subsection{The Dual Pass-Through Condition} \label{subsec:dual_pass_through_condition}
\hfill \\
Before introducing the condition mentioned above, we prove that we can take the translation factor $\delta$ to be 0 in pass through certificates for point-symmetric $P,Q$. This result was adapted from Proposition 2 of \cite{StYu23}.

\begin{prop} \label{lem:translation_factor}
	Let $P,Q$ be point-symmetric such that $P$ passes through $Q$. Then there exists some $U, \eta$ such that $P$ passes through $Q$ with $(U, \eta, 0)$ as its certificate.
\end{prop}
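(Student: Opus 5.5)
The argument is a symmetrization: from a certificate $(U,\eta,\delta)$, build the certificate $(U,\eta,-\delta)$ using point symmetry, then average the two using convexity of the constraints in Proposition \ref{proposition:equivalent_pass_through_definition}.

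Write $P=\textup{conv}\{v_1,\dots,v_n\}$ and $Q=\{Ax\leq b\}$. By Proposition \ref{proposition:equivalent_pass_through_definition} there are $z_1,\dots,z_n\in\mathbb{R}$ with
$$A(Uv_k+z_k\eta+\delta)\leq b \quad \text{for all } 1\leq k\leq n.$$
Equivalently, $Uv_k+z_k\eta+\delta\in Q$ for every $k$.

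\textbf{Reflected certificate.} Since $P=-P$, the vertex set of $P$ is closed under negation. Hence there is an involution $\sigma$ of $\{1,\dots,n\}$ with $v_{\sigma(k)}=-v_k$. Since $Q=-Q$, negating the point $Uv_{\sigma(k)}+z_{\sigma(k)}\eta+\delta\in Q$ gives
$$-Uv_{\sigma(k)}-z_{\sigma(k)}\eta-\delta = Uv_k - z_{\sigma(k)}\eta - \delta \in Q.$$
So $(U,\eta,-\delta)$ is also a certificate, with heights $z'_k=-z_{\sigma(k)}$.

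\textbf{Averaging.} $Q$ is convex, so the midpoint of the two points lies in $Q$:
$$\tfrac12\left(Uv_k+z_k\eta+\delta\right)+\tfrac12\left(Uv_k-z_{\sigma(k)}\eta-\delta\right)=Uv_k+\tfrac{z_k-z_{\sigma(k)}}{2}\eta\in Q.$$
This holds for every $k$. Applying Proposition \ref{proposition:equivalent_pass_through_definition} in the reverse direction, with heights $\tfrac{z_k-z_{\sigma(k)}}{2}$, shows that $(U,\eta,0)$ is a certificate.

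\textbf{Where care is needed.} The only delicate point is the existence of the involution $\sigma$. The proposition applies to any list of points whose convex hull is $P$, not only to the actual vertices. So if needed we may take the list to be the vertices together with their negatives, which is closed under negation by construction. Note also that the case $\eta=0$ needs no separate treatment: the same computation gives $UP\subseteq Q$ directly.
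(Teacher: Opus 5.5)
Your proposal is correct and follows essentially the same route as the paper: use $P=-P$ and $Q=-Q$ to turn the certificate $(U,\eta,\delta)$ into $(U,\eta,-\delta)$, then average the two using convexity. The only difference is the level at which you work. You average the lifted points $Uv_k+z_k\eta\pm\delta\in Q$ from Proposition \ref{proposition:equivalent_pass_through_definition}, which gives explicit heights $\frac{z_k-z_{\sigma(k)}}{2}$, whereas the paper averages the projected sets $\textup{proj}_\eta(UP\pm\delta)$ directly as a Minkowski half-sum.
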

\begin{proof}
	Since $P$ passes through $Q$ there exists $(U,\eta,\delta)$ such that
	$$\textrm{proj}_\eta(UP+\delta)\subseteq \textrm{proj}_\eta(Q),$$ 
	$$-\textrm{proj}_\eta (UP+\delta)\subseteq -\textrm{proj}_\eta(Q).$$ 
	Since projections are linear and $P,Q$ are both point-symmetric $$\textrm{proj}_\eta(UP-\delta)\subseteq \textrm{proj}_\eta(Q).$$
	Then, $$\frac{1}{2}\left(\textrm{proj}_\eta(UP+\delta)+\textrm{proj}_\eta(UP-\delta)\right)\subseteq\frac{1}{2}\left(\textrm{proj}_\eta(Q)+\textrm{proj}_\eta(Q)\right),$$$$\textrm{proj}_\eta(UP)\subseteq \textrm{proj}_\eta(Q).$$
	As a result, $P$ passes through $Q$ with certificate $(U, \eta, 0)$.
\end{proof}
\noindent
We now present the dual pass through condition.
\begin{thm} \label{thm:dual_pass_through_cond}
	Let $0\in \textup{int}(P),\textup{int}(Q)$. Let $P=\textup{conv}\{v_1,...,v_n\}, Q=\{Ax\leq 1\}$. Suppose $P$ passes through $Q$ with certificate $(U,\eta,0)$ and there exists $\eta'\in\mathbb{R}^3$ such that for all $1 \leq k \leq n, 1 \leq i \leq m,$
	$$a_i^T(Uv_k+z_k\eta)\leq 1,$$
	where $z_k = \eta'^Tv_k$.\\\\
	Then, $Q^*$ passes through $P^*$ with the certificate $(U^T,\eta',0)$.
\end{thm}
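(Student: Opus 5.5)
The plan is to apply Proposition \ref{proposition:equivalent_pass_through_definition} to the pair $(Q^*,P^*)$, using the vertex and halfspace descriptions that polar duality provides. By the facts recalled in the preliminaries, $P=\textup{conv}\{v_1,\dots,v_n\}$ gives $P^*=\{Vx\leq 1\}$, where the $k$th row of $V$ is $v_k^T$. Likewise $Q=\{Ax\leq 1\}$ gives $Q^*=\textup{conv}\{a_1,\dots,a_m\}$. Both hold because $0$ lies in the interiors of $P$ and $Q$. Since $U\in SO(3)$, we also have $U^T\in SO(3)$, so $(U^T,\eta',0)$ is an admissible certificate. By Proposition \ref{proposition:equivalent_pass_through_definition}, it therefore suffices to exhibit scalars $z_1',\dots,z_m'$ such that
$$v_k^T\left(U^Ta_i+z_i'\eta'\right)\leq 1 \quad \text{for all } 1\leq i\leq m,\ 1\leq k\leq n.$$

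The key step is the choice $z_i'=a_i^T\eta$, which mirrors the hypothesis $z_k=\eta'^Tv_k$ with the roles of $(v,\eta')$ and $(a,\eta)$ swapped. With this choice, the left-hand side equals $v_k^TU^Ta_i+(a_i^T\eta)(\eta'^Tv_k)$. Since each term is a scalar, we can transpose the first to get $a_i^TUv_k$ and rewrite the second as $a_i^T(z_k\eta)$. The whole expression therefore equals $a_i^T(Uv_k+z_k\eta)$, which is at most $1$ by the hypothesis of the theorem. This gives exactly the inequalities required for $Q^*$ to pass through $P^*$ with certificate $(U^T,\eta',0)$.

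I do not expect a real obstacle here. The only points needing care are, first, guessing the dual multipliers $z_i'=a_i^T\eta$, which is forced by the bilinear symmetry of the expression $a_i^TUv_k+(a_i^T\eta)(\eta'^Tv_k)$ in the pairs $(a_i,\eta)$ and $(v_k,\eta')$. Second, one must check that Proposition \ref{proposition:equivalent_pass_through_definition} applies with right-hand side $b=\mathbf{1}$ and with $\delta=0$. It does, since that proposition allows an arbitrary halfspace description and an arbitrary certificate $(U,\eta,\delta)$. Note also that the hypothesis that $P$ passes through $Q$ with certificate $(U,\eta,0)$ is not used beyond the stated inequality, which already encodes it with the specific multipliers $z_k=\eta'^Tv_k$.
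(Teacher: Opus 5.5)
Your proposal is correct and follows the paper's proof. The paper likewise rewrites $a_i^T(Uv_k+(\eta'^Tv_k)\eta)$ as $v_k^T(U^Ta_i+(a_i^T\eta)\eta')$ and sets $z_i'=a_i^T\eta$. It then applies Proposition \ref{proposition:equivalent_pass_through_definition} to $Q^*=\textup{conv}\{a_1,\dots,a_m\}$ and $P^*=\{Vx\leq 1\}$ with certificate $(U^T,\eta',0)$.
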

\begin{proof}
	Since $P$ passes through $Q$ with certificate $(U,\eta,0)$, for all $1\leq k\leq n$ and $1\leq i\leq m$,
	$$1\geq a_i^T(Uv_k+\eta'^Tv_k\eta)$$
	$$=a_i^TUv_k+a_i^T\eta'^Tv_k\eta$$
	$$=(a_i^TUv_k)^T+\eta'^Tv_ka_i^T\eta$$
	$$=v_k^TU^Ta_i+(\eta'^Tv_k)^Ta_i^T\eta$$
	$$=v_k^TU^Ta_i+v_k^T\eta'a_i^T\eta$$
	$$=v_k^T(U^Ta_i+\eta'a_i^T\eta)$$
	$$=v_k^T(U^Ta_i+(a_i^T\eta)\eta').$$
	Letting $z_i'=a_i^T\eta$ gives
	$$v_k^T(U^Ta_i+z_i'\eta')\leq 1$$
	for all $1 \leq k \leq n, 1 \leq i \leq m$. We note that $P^*=\{Vx\leq1\}$ and $Q^* = \textrm{conv}\{a_1\,...,a_m\}$ where $v_k^T$ is the $k$th row of $V$. Therefore, by Proposition \ref{proposition:equivalent_pass_through_definition}, $Q^*$ passes through $P^*$ with certificate $(U^T,\eta',0)$.
\end{proof}
\begin{dfn} 
If polytopes $P$ and $Q$ along with the certificate $(U, \eta, 0)$ satisfy the hypothesis of Theorem \ref{thm:dual_pass_through_cond}, then we say they satisfy the dual pass through condition.
\end{dfn}
\hfill
\subsection{The Nieuwland Number of the Octahedron}
\label{subsec: numocta}
\hfill \\
We now demonstrate the utility of the dual pass through condition by providing a proof that the cube and octahedron have the same Nieuwland number. For the remainder of this section, we let $O=\textrm{conv}\{e_1,e_2,e_3,-e_1,-e_2,-e_3\}$ be the octahedron and $C=\{x\in\mathbb{R}^3:|x_i|\leq1, \forall 1\leq i\leq3\}$ be the cube. We let the vertices of $O$ be $v_1,...,v_6$ and the vertices of $C$ be $a_1 ,..., a_8$. Note that $O=C^*, C=O^*$. We start by proving that if an octahedron passes through itself, then the certificate satisfies the dual pass through condition.
\begin{lem} \label{lem:vo_passthrough}
	Suppose $\nu O$ passes through $O$ with certificate $(U,\eta,0)$ for some $\nu\geq1$. Then, there exists $\eta'\in\mathbb{R}^3$ such that for all $1 \leq k \leq 6, 1 \leq i \leq 8,$
	$$a_i^T(U\nu v_k+z_k\eta)\leq 1,$$ where $z_k=\eta'^T\nu v_k$.
\end{lem}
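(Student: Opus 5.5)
The plan is to combine Proposition \ref{proposition:equivalent_pass_through_definition} with the point symmetry of $O$ and $C$. The key observation is that a linear choice $z_k=\eta'^T\nu v_k$ is automatically available once the $z_k$ are \emph{odd}, i.e.\ take opposite values on antipodal vertices. This is because the vertices of $O$ are exactly $\pm e_1,\pm e_2,\pm e_3$.

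First, I write $O$ in halfspace form. Since $C=O^*$ and $C=\textrm{conv}\{a_1,\dots,a_8\}$, the duality statement in the preliminaries gives $O=\{x: a_i^Tx\le 1,\ 1\le i\le 8\}$, with $a_i$ ranging over $(\pm1,\pm1,\pm1)$. Because $\nu O=\textrm{conv}\{\nu v_1,\dots,\nu v_6\}$ passes through $O$ with certificate $(U,\eta,0)$, Proposition \ref{proposition:equivalent_pass_through_definition} provides reals $z_1,\dots,z_6$ such that $a_i^T(U\nu v_k+z_k\eta)\le 1$ for all $i,k$. For a fixed vertex $v$ of $O$, define
\[
I_v=\{z\in\mathbb{R}: a_i^T(U\nu v+z\eta)\le 1,\ 1\le i\le 8\}.
\]
This is an intersection of halfspaces in the single variable $z$, so it is convex, i.e.\ an interval. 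By the above, it is nonempty for each of the six vertices.

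Next, I show that the assignment can be made odd. The set $\{a_i\}$ is closed under negation. Hence $z\in I_{-v}$, meaning $a_i^T(-U\nu v+z\eta)\le 1$ for all $i$, is equivalent (replace $a_i$ by $-a_i$) to $a_i^T(U\nu v-z\eta)\le1$ for all $i$, i.e.\ $-z\in I_v$. So $I_{-v}=-I_v$. Fix $j\in\{1,2,3\}$ and let $z_+\in I_{e_j}$ and $z_-\in I_{-e_j}$ be the values supplied above. Then $z_+$ and $-z_-$ both lie in $I_{e_j}$, and by convexity so does $w_j=\tfrac12(z_+-z_-)$. Consequently $-w_j\in I_{-e_j}$.

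Finally, I set $\eta'=\tfrac1\nu(w_1,w_2,w_3)^T$, which is possible since $\nu\ge1>0$. This gives $\eta'^T\nu e_j=w_j$ and $\eta'^T\nu(-e_j)=-w_j$, so $z_k=\eta'^T\nu v_k\in I_{v_k}$ for every vertex $v_k$. That is precisely the required inequality for all $1\le k\le6$ and $1\le i\le8$.

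There is no serious obstacle. The only point needing care is the symmetrization step: convexity of each $I_v$, together with $I_{-v}=-I_v$, lets one replace an arbitrary feasible choice of $z_k$ by an odd one. Linearity then comes for free, because the vertices of $O$ are $\pm$ a basis.
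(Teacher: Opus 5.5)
Your proof is correct and follows the paper's route. Point symmetry of $\{a_i\}$ gives $I_{-v}=-I_v$, which lets you choose odd values $z_{k+3}=-z_k$, and linearity then follows from setting $\eta'=\frac{1}{\nu}(z_1,z_2,z_3)$ because the vertices are $\pm e_j$. The convexity/averaging step is harmless but unnecessary: since $I_{-e_j}=-I_{e_j}$, you can simply take $w_j=z_+$, which is what the paper does by replacing $z_4,z_5,z_6$ with $-z_1,-z_2,-z_3$.
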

\begin{proof}
	Since $\nu O$ passes through $O$ with certificate $(U,\eta,0)$, we have that for all $1 \leq k \leq 6$, there exists $z_k \in \mathbb{R}$ such that
	$$a_i^T(U\nu v_k+z_k\eta)\leq 1.$$
	Let $v_1=e_1,v_2=e_2,v_3=e_3$. We further let $v_1=-v_4, v_2=-v_5,v_3=-v_6$. For each $a_i$ we let $a_{i'}=-a_i$. Then, $a_i^T(U\nu v_k+z_k\eta)\leq1$ if and only if $a_{i'}^T(U\nu(-v_k)-z_k\eta)\leq1$. So, we can find a solution where take $z_4=-z_1, z_5=-z_2, z_6=-z_3$.
    \\\\
    Now, set $\eta'=\frac{1}{\nu}(z_1,z_2,z_3)$. We note that for $1\leq k\leq3$ we have $$\eta'^T(\nu v_k)=\frac{1}{\nu}\nu z_k=z_k$$ and for $4\leq k\leq6$ we have $$\eta'^T(\nu v_k)=\frac{1}{\nu}(-\nu) (-z_k)=z_k.$$ Thus, for all $1\leq k\leq 6, 1\leq i \leq 8$ we have $a_i^T(U\nu v_k+z_k\eta)\leq 1$ with $z_k=\eta'^T(\nu v_k)$ as desired.
\end{proof}
\noindent
We now prove a similar result for the cube. In contrast to the previous result, we will focus solely on the optimal solution for a cube passing through a cube. The Nieuwland number for the cube is $\nu_C=\frac{3\sqrt2}{4}$ \cite{StYu23}. The following construction is taken from \cite{Ga01}.\\
\begin{center}
	\begin{tikzpicture}
        \draw[black,thick]
            (-1,-1,1)
            -- ++(2,0,0)
            -- ++(0,2,0)
            -- ++(-2,0,0)
            -- cycle;
        \draw[black,thick]
            (-1,1,1)
            -- ++(0,0,-2)
            -- ++(2,0,0)
            -- ++(0,0,2)
            -- cycle;
        \draw[black,thick]
            (1,1,-1)
            -- ++(0,-2,0)
            -- ++(0,0,2);
        \draw[black,thick,dashed]
            (-1,-1,1) -- ++ (0,0,-2);
        \draw[black,thick,dashed]
            (-1,1,-1) -- ++ (0,-2,0);
        \draw[black,thick,dashed]
            (1,-1,-1) -- ++ (-2,0,0);

        \filldraw [red, thick] (-0.5,1,-1) circle (2pt);
        \filldraw [red, thick] (1,1,0.5) circle (2pt);
        \filldraw [red, thick] (0.5,-1,1) circle (2pt);
        \filldraw [red, thick] (-1,-1,-0.5) circle (2pt);
        \filldraw[red,very thick,fill=blue,opacity=0.5]
            (-0.5,1,-1)
            -- (1,1,0.5)
            -- (0.5,-1,1)
            -- (-1,-1,-0.5)
            -- cycle;
        \node[above left,red] at (-0.5,1,-1) {$w_1$};
        \node[below right,red] at (1,1,0.5) {$w_2$};
        \node[below right,red] at (0.5,-1,1) {$w_3$};
        \node[above left,red] at (-1,-1,-0.5) {$w_4$};
        
        \pgfmathsetmacro{\myRoot}{sqrt(2)}

        \filldraw[purple, thick]
             (-0.5-\myRoot/2,1+\myRoot/4,-1+\myRoot/2)
             circle
             (2pt);
        \draw[purple,thick]
            (-0.5,1,-1) -- ++(
                -\myRoot/2,
                \myRoot/4,
                \myRoot/2
            );
        \node[above left,purple] at (-0.5-\myRoot/2,1+\myRoot/4,-1+\myRoot/2) {$u_1$};

        \filldraw[purple, thick]
             (-0.5+\myRoot/2,1-\myRoot/4,-1-\myRoot/2)
             circle
             (2pt);
        \draw[purple,thick]
            (-0.5,1,-1) -- ++(
                \myRoot/2,
                -\myRoot/4,
                -\myRoot/2
            );
        \node[above right,purple] at (-0.5+\myRoot/2,1-\myRoot/4,-1-\myRoot/2) {$u_5$};    
    \end{tikzpicture}
\end{center}
\noindent
Figure 1: An illustration of a $2\times 2 \times 2$ cube, with the square defined by $w_1, w_2, w_3, w_4$ in red and $u_1,u_5$ in purple. Collectively, these define the optimal passage for a cube of side length $2 \times \frac{3\sqrt{2}}{4} = 2\nu_C$ through a cube of side length $2$.\\\\
\noindent
Consider the square with vertices by $w_1=(-\frac{1}{2},1,-1),w_2=(1,1,\frac{1}{2}),w_3=(\frac{1}{2},-1,1)$ and $w_4=(-1,-1,-\frac{1}{2})$. Indeed, we note that $$\|w_1-w_2\|=\|w_2-w_3\|=\|w_3-w_4\|=\|w_4-w_1\|=\frac{3}{2}\sqrt{2}$$ and $$\langle w_1-w_2,w_2-w_3\rangle=\langle w_2-w_3,w_3-w_4\rangle = \langle w_3-w_4,w_4-w_1\rangle = \langle w_4 -w_1, w_1-w_2\rangle=0.$$ This proves that every side length is equal and every angle is a right angle, so $w_1,w_2,w_3,w_4$ are the vertices of a square.
\\\\
Let $\eta = \left( \frac{-2}{3},\frac{1}{3}, \frac{2}{3} \right) \in S^2$. We note that $\langle\eta,w_i\rangle=0$ for each $1\leq i \leq4$. For each $1\leq i\leq 4$, we let $u_i=w_i+\frac{3\sqrt2}{4}\eta$ and $u_{i+4}=w_i-\frac{3\sqrt2}{4}\eta$. Since $\eta$ is perpendicular to each $w_i$ and $\|u_i-u_{i+4}\|$ is the same as the side length of the square given by the $w_i$, the points $u_1,...,u_8$ give the vertices of a cube $C'$ with side length $\frac{3\sqrt2}{2}$. Note that $\textrm{proj}_\eta(C')=\textrm{conv}\{w_1,...,w_4\}$ and $w_i\in C$ for each $1\leq i\leq4$. Thus, $\textrm{proj}_\eta(C')\subseteq \textrm{proj}_\eta(C)$. Since $C'$ is a cube with side length $\frac{3\sqrt2}{2}=2\nu_C$, there exists some $U$ such that $C'=\nu_C UC$, so $\nu_CC$ passes through $C$ with certificate $(U,\eta,0)$. Recall that the vertices of $C$ are given by $a_1 ,...,a_8$ and the faces by $v_1 \cdots v_6$.
\begin{lem} \label{lem:vc_passthrough}
	Consider the certificate $(U,\eta,0)$ given above. There exists $\eta'\in\mathbb{R}^3$ such that for all $1 \leq k \leq 6,1 \leq i \leq 8,$
	$$v_{k}^T(U\nu_Ca_i+z_i\eta)\leq 1,$$
    where $z_i = \eta'^T\nu_Ca_i$.
\end{lem}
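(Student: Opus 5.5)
The plan is to choose $\eta'$ so that the lift $z_i\eta$ exactly cancels the $\eta$-component of each rotated, scaled vertex. The construction is already arranged so that the resulting points are $w_1,\dots,w_4$, which lie in $C$. Concretely, I will take
$$\eta'=-U^T\eta,$$
which gives $z_i=\eta'^T\nu_C a_i=-\eta^T(U\nu_C a_i)$. The key observation is that the quantity we need to be linear in $a_i$ is the signed height of $U\nu_C a_i$ along $\eta$, and this is automatically a linear functional of $a_i$.

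The steps are as follows.

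\emph{Step 1.} Since $\|\eta\|=1$, for every $x$ we have $x-(\eta^Tx)\eta=\textrm{proj}_\eta(x)$. Hence
$$U\nu_C a_i+z_i\eta=\textrm{proj}_\eta(U\nu_C a_i).$$

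\emph{Step 2.} By construction $C'=\nu_C UC$, so the points $U\nu_C a_i$ for $1\le i\le 8$ are exactly the vertices $u_1,\dots,u_8$ of $C'$, in some order. Each vertex has the form $u_j=w_l\pm\frac{3\sqrt2}{4}\eta$ for some $1\le l\le 4$. Because $\langle\eta,w_l\rangle=0$, it follows that $\textrm{proj}_\eta(u_j)=w_l$.

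\emph{Step 3.} Each $w_l$ lies in $C=\{x:|x_j|\le1\}$, as can be read off from the explicit coordinates. The defining inequalities of $C$ in the form $\{Vx\le 1\}$ are exactly $v_k^Tx\le1$, where $v_k=\pm e_j$ are the vertices of $O=C^*$. So $v_k^Tw_l\le1$ for all $k$ and $l$. Combining Steps 1 to 3 gives $v_k^T(U\nu_C a_i+z_i\eta)\le1$ for all $1\le k\le6$ and $1\le i\le8$, which is the claim.

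The only point needing care is Step 2: one must confirm that the rotation $U$ sending $\nu_C C$ onto $C'$ actually maps the vertex set $\{\nu_C a_i\}$ onto $\{u_j\}$. This holds because $U$ is an isometry taking the cube $\nu_C C$ to the cube $C'$, and such a map carries vertices to vertices. The explicit form of $U$ is never needed, since $\eta'$ is defined in terms of $U$ itself. I expect no real obstacle beyond this bookkeeping. The point to emphasize is that choosing $\eta'=-U^T\eta$ makes the $z_i$ linear in the vertices by design, whereas the octahedron case in Lemma \ref{lem:vo_passthrough} needed a symmetry argument to obtain that linearity.
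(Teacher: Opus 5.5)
Your proposal is correct and essentially identical to the paper's proof. The paper also takes $\eta'=-U^T\eta$, so that $U\nu_C a_i+z_i\eta=u_i-(\eta^Tu_i)\eta$ equals one of the $w_l\in C$, and your remark that $U$ carries the vertices of $\nu_C C$ onto those of $C'$ simply justifies the paper's ``up to relabeling'' step.
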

\begin{proof}
 Unlike the proof for the octahedron, for this proof, we will construct $\eta'$ explicitly. We can assume, up to relabeling, $\nu_CUa_i=u_i$. We note that from the above construction we have $w_i\in C$. Then for all $1 \leq k \leq 6, 1 \leq i \leq 4$,
	$$v_{k}^T(w_i)\leq 1.$$
	Let $\eta'=-U^T\eta$. For all $1 \leq k \leq 6, 1 \leq i \leq 8,$
	$$v_{k}^T(U\nu_Ca_i+\eta'^T\nu_Ca_i\eta),$$
    $$=v_{k}^T(u_i+\eta'^T(U^{-1}u_i)\eta),$$
	$$=v_{k}^T(u_i-(U^T\eta)^T(U^{-1}u_i)\eta),$$
	$$=v_{k}^T(u_i-\eta^Tu_i\eta).$$
    Then we either have
	$$=v_{k}^T(w_i),$$
    or
    $$=v_k^T(w_{i-4}).$$
    Both of which are less than or equal to $1$.
\end{proof}
\begin{thm} \label{thm:oct_nieuwland} The Nieuwland number of the cube is equal to the Nieuwland number of the octahedron. That is,
	$$\nu_C=\nu_O.$$    
\end{thm}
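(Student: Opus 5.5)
We will prove the two inequalities $\nu_O\geq\nu_C$ and $\nu_C\geq\nu_O$ separately, each time transporting a pass-through certificate across polar duality with Theorem \ref{thm:dual_pass_through_cond}. Throughout we use $O=C^*$, $C=O^*$ and $(cP)^*=\frac1c P^*$. Both $O$ and $C$ are point symmetric, so Proposition \ref{lem:translation_factor} lets us take $\delta=0$ in every certificate.

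For $\nu_O\geq\nu_C$, take the explicit certificate $(U,\eta,0)$ from the construction above, for which $\nu_C C$ passes through $C$. We apply Theorem \ref{thm:dual_pass_through_cond} with $P=\nu_C C=\textup{conv}\{\nu_C a_1,\dots,\nu_C a_8\}$ and $Q=C=\{Vx\leq 1\}$. Here the rows of $V$ are the $v_k^T$, since $C=O^*$ and $O=\textup{conv}\{v_k\}$. Lemma \ref{lem:vc_passthrough} supplies exactly the required vector $\eta'$, with $z_i=\eta'^T(\nu_C a_i)$. The theorem then gives that $Q^*=C^*=O$ passes through $P^*=(\nu_C C)^*=\frac{1}{\nu_C}O$, with certificate $(U^T,\eta',0)$.

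Next we rescale. If $\textup{proj}_{\eta'}(U^TO)\subseteq\textup{proj}_{\eta'}(\frac1{\nu_C}O)$, then multiplying by $\nu_C$ and using linearity of the projection gives $\textup{proj}_{\eta'}(U^T\nu_C O)\subseteq\textup{proj}_{\eta'}(O)$. So $\nu_C O$ passes through $O$. Since $\nu_C\geq1$, Definition \ref{dfn:Nieuwland 1} yields $\nu_O\geq\nu_C$.

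For $\nu_C\geq\nu_O$, we first use Lemma \ref{lem:max_number}: $\nu_O O$ passes through $O$, and Proposition \ref{lem:translation_factor} lets the certificate have the form $(U,\eta,0)$. We apply Theorem \ref{thm:dual_pass_through_cond} with $P=\nu_O O=\textup{conv}\{\nu_O v_k\}$ and $Q=O=\{Ax\leq1\}$, where the rows of $A$ are the $a_i^T$ because $O=C^*$. Lemma \ref{lem:vo_passthrough} provides $\eta'$. Hence $O^*=C$ passes through $(\nu_O O)^*=\frac1{\nu_O}C$, and rescaling by $\nu_O$ as before shows that $\nu_O C$ passes through $C$. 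Therefore $\nu_C\geq\nu_O$.

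Two points need care.

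\emph{Matching the lemmas to the theorem.} One must check that the hypothesis of Lemma \ref{lem:vo_passthrough} holds. That lemma assumes a certificate with $\delta=0$, which Proposition \ref{lem:translation_factor} provides because $O$ and $\nu_O O$ are point symmetric. One must also check that the inequalities in the lemmas are precisely those in Theorem \ref{thm:dual_pass_through_cond}, with $v_k$ replaced by the scaled vertices $\nu v_k$ (respectively $\nu_C a_i$).

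\emph{The rescaling step.} Multiplying a projection inclusion by a positive scalar preserves it, because $\textup{proj}_{\eta'}(cX)=c\,\textup{proj}_{\eta'}(X)$. This routine check is the only remaining detail, and it completes the proof.
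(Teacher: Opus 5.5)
Your proposal is correct and follows the paper's proof essentially step for step. Each inequality comes from feeding the certificate supplied by Lemma \ref{lem:vo_passthrough} (resp.\ Lemma \ref{lem:vc_passthrough}) into Theorem \ref{thm:dual_pass_through_cond} and then rescaling by $\nu_O$ (resp.\ $\nu_C$); your explicit appeals to Lemma \ref{lem:max_number} and Proposition \ref{lem:translation_factor} just make precise what the paper assumes when it writes ``Suppose $\nu_O O$ passes through $O$ with certificate $(U,\eta,0)$.''
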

\begin{proof}
	Suppose $\nu_O O$ passes through $O$ with certificate $(U,\eta,0)$. By Lemma \ref{lem:vo_passthrough} and Theorem \ref{thm:dual_pass_through_cond}, there exists $\eta'\in\mathbb{R}^3$ such that $O^*=C$ passes through $(\nu_O O)^*=\frac{1}{\nu_O}C$. Thus, by scaling by $\nu_O$, $\nu_O C$ passes through $C$. Then, by definition of $\nu_C$ we have $\nu_C \geq \nu_O$.
	\\\\
	Nieuwland's construction yields a certificate $(U,\eta,0)$ for which $\nu_C C$ passes through $C$. By Lemma \ref{lem:vc_passthrough} and Theorem \ref{thm:dual_pass_through_cond}, there exists an $\eta'$ such that $C^*=O$ passes through $(\nu_CC)^*=\frac{1}{\nu_C}O$ with certificate $(U^T,\eta',0)$. Scaling by $\nu_C$ shows us that $\nu_C O$ passes through $O$. Since $\nu_O$ is the supremum over all $\nu$ such that $\nu O$ passes through $O$ we have $\nu_O \geq \nu_C$.
    \\\\
    Thus, $\nu_C=\nu_O$ as desired.
\end{proof}
\noindent
It should be noted that this proof method is not easily modified to give proofs of equality between the Nieuwland numbers of other dual polytopes. It relies both on the simple combinatorial nature of the octahedron and the orientation of Nieuwland's solution to Rupert's problem for the cube. For the octahedron, we use the fact that it is point symmetric and has exactly 6 vertices. The important thing to note is that there are 3 unique vertices up to sign, which is equal to the dimension of the space we are working over.
\\\\
For the cube, we use the optimal solution given by Nieuwland to prove Lemma \ref{lem:vc_passthrough}. Not only that, but the proof hinges on the fact that the projection of the larger cube onto the chosen subspace lies within the smaller cube. Adapting this proof technique as is would be impossible, since no other optimal solutions to Rupert's problem are known for other polytopes. Even so, there is no reason to believe that there will exist a subspace for which the projection of the larger copy lies within the smaller copy.

\section{Nieuwland Algorithm} \label{sec:main_algorithm}

Computing exact Nieuwland numbers is difficult. Even finding approximations for small polytopes has proven to be computationally expensive. Although there are some algorithms which can theoretically compute exact values, their time complexities are on the order of $n!$ and they are impossible to run, even on polytopes with a small number of vertices \cite{StYu23}.\\\\
We develop a polynomial time algorithm for computing Nieuwland numbers. Despite being polynomial, the time complexity of the algorithm is quite large, so this algorithm is only a theoretical contribution. We hope that more techniques can be applied in combination with the core ideas here to develop tractable approaches in the future.
\\
\subsection{The Pass-Through System of Equations and Inequalities} \label{subsec:pass_through}
\hfill \\
Recall Definition \ref{def:pass_through} and Proposition \ref{proposition:equivalent_pass_through_definition}, which states that $P=\textrm{conv}(v_1, \cdots v_n)$ can pass through $Q = \{Ax \leq b\}$ if and only if
\begin{gather*}
	\exists U,\delta,\eta, z_1 \cdots z_n \in SO(3)\times \mathbb{R}^3\times\mathbb{R}^3\times \mathbb{R} \cdots \mathbb{R},\\
	\forall 1\leq k \leq n: \quad A(Uv_k+\delta+z_k\eta) \leq b.
\end{gather*}
This is a system of quadratic equations and inequalities in the variables $U, \delta, \eta$ and $z_1 \cdots z_n$. Let $\nu \geq 0$ be a free variable and consider the substitutions $Q \leftarrow P$ and $P \leftarrow \nu P$. Then, by definition, $\nu_P$ is the maximum $\nu$ such that there exist $U, \delta, \eta, z_1 \cdots z_n$ that satisfy these equations. Since all these constraints can be expressed as quadratics, finding $\nu_P$ can be formulated as a problem known in operations research as a Quadratically Constrained Quadratic Program (QCQP).\\\\
A QCQP is an optimization problem with quadratic objective function and constraints. Since algorithms for solving QCQPs exist, this definition already gives us a hypothetical way to compute the Nieuwland numbers. However, this system of equations and inequalities is too large to be optimized using standard solvers in practice. To make this approach feasible, the number of variables must be reduced. The obvious choice for this reduction are the $z_k$'s as their number grows linearly with the number of extreme points of $P$.\\\\
We construct an equivalent version of Definition \ref{def:pass_through}, which allows us to avoid the extra $z$ variables. Most of the ideas in the following proof are inspired by the Fourier-Motzkin elimination algorithm described in \cite{ConCorZam14}. 

\begin{thm} \label{thm:pass_through_simplified}
	Let $P=\textup{conv}(v_1 \cdots v_n)$ and $Q=\{Ax \leq b\}$ be polytopes and let the rows of $A$ be $a_1^T \cdots a_m^T$. Let $\eta \neq 0$, then the following are equivalent.
	\begin{itemize}
		\item $P$ passes through $Q$ with certificate $(U, \eta, \delta)$
		\item There exists a bipartition $(I_+, I_-)$ of the set $\{1 \cdots m\}$ such that the following are true.
		      \begin{enumerate}
		      	\item For all $i \in I_+, a_i^T \eta \geq 0$ and for all $j \in I_-, a_j^T \eta \leq 0$.
		      	\item For all $1 \leq k \leq n, i \in I_+, j \in I_-$
		      	      \begin{gather*}
		      	      	(b_j-a_j^T(Uv_k+\delta))(a_i^T\eta) \geq (b_i-a_i^T(Uv_k+\delta))(a_j^T\eta).
		      	      \end{gather*}
		      \end{enumerate}
	\end{itemize}
\end{thm}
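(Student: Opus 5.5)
The plan is to reduce the statement, via Proposition \ref{proposition:equivalent_pass_through_definition}, to a family of one-variable feasibility problems, one for each vertex, and then eliminate the single variable $z_k$ in Fourier--Motzkin fashion. Set $x_k = Uv_k+\delta$, $s_{i,k} = b_i - a_i^T x_k$ and $c_i = a_i^T\eta$. By Proposition \ref{proposition:equivalent_pass_through_definition}, $P$ passes through $Q$ with certificate $(U,\eta,\delta)$ if and only if, for every $k$, there is $z_k\in\mathbb{R}$ with $c_i z_k \le s_{i,k}$ for all $i$. Each such constraint is of one of three types:
\begin{itemize}
\item an upper bound $z_k \le s_{i,k}/c_i$ when $c_i>0$;
\item a lower bound $z_k \ge s_{i,k}/c_i$ when $c_i<0$;
\item the $z$-free condition $s_{i,k}\ge 0$ when $c_i=0$.
\end{itemize}
So a suitable $z_k$ exists iff every lower bound is at most every upper bound and every $z$-free condition holds.

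First I will record a boundedness fact. Since $Q$ is bounded and $\eta\neq 0$, there exist indices with $c_i>0$ and indices with $c_j<0$. Indeed, if $c_i\ge 0$ for all $i$, then $x-t\eta\in Q$ for every $x\in Q$ and $t\ge 0$, contradicting compactness; the symmetric argument gives the other sign. Thus for any bipartition satisfying (1), the set $I_+$ contains an index with $c_i>0$ and $I_-$ contains one with $c_j<0$.

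For the forward direction, I take $I_+=\{i: c_i\ge 0\}$ and $I_-=\{j:c_j<0\}$, so (1) holds. For $i\in I_+$ and $j\in I_-$ there are two cases.
\begin{itemize}
\item If $c_i>0$, then $s_{j,k}/c_j \le z_k \le s_{i,k}/c_i$. Multiplying by $c_ic_j<0$ reverses the inequality and gives exactly $s_{j,k}c_i\ge s_{i,k}c_j$, which is (2).
\item If $c_i=0$, then (2) reads $0\ge s_{i,k}c_j$, which holds because $s_{i,k}\ge 0$ and $c_j<0$.
\end{itemize}

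For the converse, fix any bipartition satisfying (1) and (2), and fix $k$. The pairs $i\in I_+$, $j\in I_-$ with $c_i>0>c_j$ give, after dividing (2) by $c_ic_j<0$, that every lower bound is at most every upper bound. A zero index $i\in I_+$ is paired with some $j\in I_-$ with $c_j<0$, which exists by the boundedness fact; then (2) gives $0\ge s_{i,k}c_j$, hence $s_{i,k}\ge 0$. Symmetrically, a zero index $j\in I_-$ is paired with some $i\in I_+$ with $c_i>0$, and (2) gives $s_{j,k}c_i\ge 0$, hence $s_{j,k}\ge 0$. Pairs of two zero indices give $0\ge 0$ and carry no information, but they are not needed. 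Hence some $z_k$ lies between the maximal lower bound and the minimal upper bound, and Proposition \ref{proposition:equivalent_pass_through_definition} finishes the argument.

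The main obstacle is the treatment of the indices with $c_i=0$. Their constraints do not involve $z_k$ and are recovered from (2) only through a partner of strictly nonzero sign. This is exactly where the hypotheses that $Q$ is bounded (a polytope) and that $\eta\neq 0$ enter, so the boundedness argument must be written carefully.
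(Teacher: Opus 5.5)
Your proof is correct and follows the paper's argument essentially step for step. Both reduce via Proposition~\ref{proposition:equivalent_pass_through_definition} to the existence of each $z_k$, and both take $I_+=\{i: a_i^T\eta\ge 0\}$, $I_-=\{i: a_i^T\eta<0\}$ in the forward direction. In the converse, both eliminate $z_k$ Fourier--Motzkin style, recovering the $z$-free constraints for indices with $a_i^T\eta=0$ from a partner index of strictly opposite sign. Your explicit ray argument for why such partners exist is a welcome addition, since the paper only asserts this from the boundedness of $Q$.
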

\begin{proof}
	$(\Longrightarrow)$
	\\\\
	By the assumption there are $z_1, \cdots, z_n$ such that for all $1 \leq k \leq n$, we have
	\begin{gather*}
		A(Uv_k + \delta+z_k\eta) \leq b,\\
		A(Uv_k + \delta)+z_kA\eta \leq b,\\
		\begin{bmatrix}A & A\eta\end{bmatrix}\begin{bmatrix}Uv_k + \delta \\ z_k\end{bmatrix} \leq b.
	\end{gather*}
	Let $I_- = \{1 \leq i \leq m:a_i^T\eta < 0\}$ and $I_+ = \{1 \leq i \leq m:a_i^T\eta \geq 0\}$. Then $(I_-, I_+)$ is a bipartition of $\{1 \cdots m\}$.\\\\
	Now consider arbitrary $i, j, k$ such that $1 \leq k \leq n$, $i \in I_+$ and $j \in I_-$. We prove that $(I_-, I_+)$ satisfies the conclusions of the theorem. We consider two cases, $a_i^T\eta = 0$ and $a_i^T\eta > 0$. Recall that $a_j^T\eta < 0$ by the construction of $I_+$ and $I_-$.\\\\
	Consider the case where $a_i^T\eta = 0$. By the assumption, we know
	\begin{gather*}
		a_i^T(Uv_k+\delta+z_k \eta) \leq b_i.
	\end{gather*}
	Since $a_i^T \eta = 0$, this becomes
	\begin{gather*}
		a_i^T(Uv_k+\delta+z_k \eta) \leq b_i,\\
		a_i^T(Uv_k+\delta)+z_k a_i^T\eta \leq b_i,\\
		a_i^T(Uv_k+\delta) \leq b_i,\\
		0 \leq b_i-a_i^T(Uv_k+\delta).
	\end{gather*}
	Since $j \in I_-$, we have $a_j^T\eta < 0$. Thus,
	\begin{gather*}
		0 \geq (b_i-a_i^T(Uv_k+\delta))(a_j^T\eta).
	\end{gather*}
	Since we have $a_i^T\eta = 0$, $(b_j-a_j^T(Uv_k+\delta))(a_i^T\eta) = 0$. So,
	\begin{gather*}
		0 \geq (b_i-a_i^T(Uv_k+\delta))(a_j^T\eta),\\
		(b_j-a_j^T(Uv_k+\delta))(a_i^T\eta) \geq (b_i-a_i^T(Uv_k+\delta))(a_j^T\eta).\\
	\end{gather*}
	as desired.\\\\
	Now consider the case where $a_i^T\eta > 0$. From the assumptions of this direction, we have
	\begin{gather*}
		a_i^T(Uv_k+\delta) + (a_i^T\eta)z_k \leq b_i,\\
		a_j^T(Uv_k+\delta) + (a_j^T\eta)z_k \leq b_j.
	\end{gather*}
	Since $i \in I_+$ and $j \in I_-$
	\begin{gather*}
		z_k \leq \frac{b_i-a_i^T(Uv_k+\delta)}{a_i^T\eta},\\
		z_k \geq \frac{b_j-a_j^T(Uv_k+\delta)}{a_j^T\eta}.
	\end{gather*}
	Combining these yields
	\begin{gather*}
		\frac{b_j-a_j^T(Uv_k+\delta)}{a_j^T\eta} \leq \frac{b_i-a_i^T(Uv_k+\delta)}{a_i^T\eta},\\
		(b_j-a_j^T(Uv_k+\delta))(a_i^T\eta) \geq (b_i-a_i^T(Uv_k+\delta))(a_j^T\eta).
	\end{gather*}
	As desired.\\\\
	$(\Longleftarrow)$
	\\\\
	By the assumption, we have a bipartition $(I_-,I_+)$ such that for all $1 \leq k \leq n, i \in I_+, j \in I_-,$
	\begin{gather*}
		(b_j-a_j^T(Uv_k+\delta))(a_i^T\eta) \geq (b_i-a_i^T(Uv_k+\delta))(a_j^T\eta).
	\end{gather*}
    and for all $i \in I_+$ and $j \in I_-$, $a_i^T\eta \geq 0$ and $a_j^T \eta \leq 0$.\\\\
	Let $k$ be arbitrary. We want to prove that there exists a $z_k$ such that for all $1 \leq i \leq n$
	\begin{gather*}
		a_i^T(Uv_k+\delta)+z_ka_i^T\eta \leq b.
	\end{gather*}
    For each $1 \leq i \leq n$, we examine the condition above to see the constraints on $z_k$. We then show that, given the assumption, there must exist a solution.\\\\
	Let $1 \leq i \leq n$ be arbitrary. We consider three cases, $a_i^T\eta = 0, a_i^T\eta < 0$ and $a_i^T\eta  > 0$.\\\\
	If $a_i^T \eta = 0$, we then have two more cases, $i \in I_+$ and $i \in I_-$. If $i \in I_+$, the condition becomes
	\begin{gather*}
		a_i^T(Uv_k+\delta)+z_k(0) = a_i^T(Uv_k+\delta) \leq b.
	\end{gather*}
    This condition is independent of $z_k$. So, as long as it is always satisfied, it does not influence the value of $z_k$. Let $j \in I_-$ be such that $a_j^T \eta < 0$. We are guaranteed that one such $j$ must exist since $Q$ is bounded. Using the fact that $a_i^T\eta = 0$, the assumption can be simplified.
    \begin{gather*}
		(b_j-a_j^T(Uv_k+\delta))(a_i^T\eta) \geq (b_i-a_i^T(Uv_k+\delta))(a_j^T\eta),\\
        0 \geq (b_i-a_i^T(Uv_k+\delta))(a_j^T\eta).
	\end{gather*}
    Since $a_j^T\eta < 0$,
    \begin{gather*}
        0 \leq b_i-a_i^T(Uv_k+\delta),\\
        a_i^T(Uv_k+\delta) \leq b_i.
	\end{gather*}
	as a result, our assumptions guarantee this condition is always satisfied. The case where $i \in I_-$ is very similar, but with the roles of $i$ and $j$ in the previous proof reversed.\\\\
	If $a_i^T\eta > 0$, the condition becomes
	\begin{gather*}
		z_k \leq \frac{b_i-a_i^T(Uv_k+\delta)}{a_i^T\eta}.
	\end{gather*}
	If $a_i^T\eta < 0$, the condition becomes
	\begin{gather*}
		z_k \geq \frac{b_i-a_i^T(Uv_k+\delta)}{a_i^T\eta}.
	\end{gather*}
	As a result, such a $z_k$ exists if and only if for all $i \in I_+$ and $j \in I_-$
	\begin{gather*}
		\frac{b_j-a_j^T(Uv_k+\delta)}{a_j^T\eta} \leq \frac{b_i-a_i^T(Uv_k+\delta)}{a_i^T\eta},\\
		(b_j-a_j^T(Uv_k+\delta))(a_i^T\eta) \geq (b_i-a_i^T(Uv_k+\delta))(a_j^T\eta).
	\end{gather*}
    Which holds by the hypothesis.
\end{proof}
\subsection{The Nieuwland QCQP}\label{subsec:nieuwland_qcqp}
\hfill \\
The simplification of Definition \ref{def:pass_through} given by Theorem \ref{thm:pass_through_simplified} makes finding Nieuwland numbers for large polytopes substantially simpler. The core idea is still to find the largest $\nu$ so that $\nu P$ can pass through $P$. However, using the simplified version of Definition \ref{def:pass_through} requires checking all bipartitions of $\{1 \cdots m\}$. We introduce the condition $\lVert \eta \rVert^2 = 1$ to ensure that $\eta \neq 0$. To avoid cubic terms, we replace $\nu U$ with $U$ and use an adjusted version of the rotation constraints. The Nieuwland number of a convex polytope $P=\{Ax \leq b\} = \textrm{conv}(v_1 \cdots v_n)$ can be written as follows. 

\begin{gather*}
	\max_{(I_+,I_-)\textrm{ bipartition of [m]}}
	\max_{\begin{matrix}U \in M_{3 \times 3}(\mathbb{R}) \\ \eta \in \mathbb{R}^3 \\ \delta \in \mathbb{R}^3 \\ \nu \in \mathbb{R}^+ \end{matrix}}
	\nu \quad\\
	s.t.
\end{gather*}
\begin{center}
	$\lVert \eta \rVert^2 = 1$\\
	$\lVert U_1 \rVert^2 = \nu^2 \quad \lVert U_2 \rVert^2 = \nu^2 \quad U_1 \cdot U_2 = 0 \quad \nu U_3 = U_1 \times U_2$\\
	$a_i^T\eta \geq 0 \quad \forall i \in I_+$\\
	$a_i^T\eta \leq 0 \quad \forall i \in I_-$\\
	$(b_j-a_j^T(U v_k+\delta))(a_i^T\eta) \geq (b_i-a_i^T(U v_k+\delta))(a_j^T\eta) \quad \forall k =1 \cdots n, i \in I_+, j \in I_-$
\end{center}
\noindent
Computing this value requires solving $2^m$ QCQPs. While this is an improvement, it is still exponential. In order to achieve a polynomial time algorithm, we need to reduce the number of bipartitions we are checking. To accomplish this, we prove the following lemma.
\begin{lem} \label{lem:hyperplanes}
	Let $a_1 \cdots a_m \in \mathbb{R}^3$. There exists a subset $S$ of the set of bipartitions of $\{1 \cdots m\}$ with size $O(m^3)$ such that for all $\eta \in \mathbb{R}^3$, there exists $(I_-, I_+) \in S$ such that $a_i^T \eta \leq 0$ for all $i \in I_-$ and $a_i^T\eta \geq 0$ for all $i \in I_+$ .
\end{lem}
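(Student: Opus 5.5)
We will prove this with a hyperplane-arrangement argument. For each $i$, let $H_i=\{\eta\in\mathbb{R}^3: a_i^T\eta=0\}$; if $a_i=0$ then $H_i$ is all of $\mathbb{R}^3$ and $i$ may be placed in either part, so we may discard such indices. Every $\eta$ has a \emph{sign vector} $\sigma(\eta)\in\{-,0,+\}^m$ given by the signs of $a_i^T\eta$, and a sign vector determines an admissible bipartition: put $i$ in $I_-$ if the sign is $-$, and in $I_+$ otherwise. We therefore take $S$ to be the set of bipartitions obtained in this way from all sign vectors $\sigma(\eta)$ as $\eta$ ranges over $\mathbb{R}^3$. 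By construction, for every $\eta$ the bipartition induced by $\sigma(\eta)$ lies in $S$ and satisfies the required inequalities. It remains to show that $|S|=O(m^3)$, and it suffices to bound the number of distinct sign vectors.

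The sign vector is constant on each relatively open face of the central arrangement $\{H_1,\dots,H_m\}$ in $\mathbb{R}^3$. So the number of distinct sign vectors is at most the number of faces of all dimensions. We bound these by dimension.

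\begin{itemize}
\item The $0$-dimensional faces: only the origin, since the arrangement is central.
\item The $1$-dimensional faces: each lies on a line $H_i\cap H_j$ (or inside some $H_i$ when the planes are parallel). There are at most $\binom{m}{2}$ such lines, each giving two open rays, so $O(m^2)$ faces.
\item The $2$-dimensional faces: inside a fixed plane $H_i$, the remaining planes cut out at most $m-1$ lines through the origin. These lines divide $H_i$ into at most $2(m-1)$ sectors, giving $O(m^2)$ faces in total.
\item The $3$-dimensional faces: by the classical bound on regions of a central arrangement (equivalently, of $m$ great circles on $S^2$), there are at most $m(m-1)+2=O(m^2)$ of them. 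This follows by adding planes one at a time: the $k$-th plane is cut by the previous ones into at most $2(k-1)$ sectors, and each sector splits one region into two.
\end{itemize}

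The total is therefore $O(m^2)\subseteq O(m^3)$, which proves the lemma. The statement's $O(m^3)$ even leaves room for a cruder counting: choose two planes, take their intersection line, and perturb along each of the constantly many local directions.

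The main obstacle is making the region count rigorous while handling degeneracies, namely parallel or coincident $H_i$ and multiple planes through the same line. The incremental argument handles these by counting only sectors that genuinely split a region, which gives an upper bound. If a constructive $S$ is wanted, as the algorithm needs, an alternative is to enumerate directly: for each pair $i<j$, take a nonzero $d\in H_i\cap H_j$, and for each triple of signs form perturbations $\pm d\pm\epsilon u\pm\epsilon^2 w$ with suitable generic $u,w$. This enumeration realizes every face's sign pattern and gives the explicit $O(m^3)$ size.
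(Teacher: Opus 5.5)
Your argument is correct. It rests on the same basic idea as the paper's proof: read the bipartitions off the cells of the arrangement $a_i^T\eta=0$. The execution differs in two useful ways.
\begin{itemize}
\item The paper cites a general result that $m$ planes cut $\mathbb{R}^3$ into $O(m^3)$ regions with non-empty interior, and takes $S$ to be the sign patterns of these closed regions. It leaves implicit that every $\eta$ lies in one of them. This is true because the closed regions are the closures of the components of the dense complement of the planes. You instead take the sign vector of every $\eta$, so you include the lower-dimensional faces, and the covering property becomes tautological.
\item You prove the count yourself, using that every plane passes through the origin. This gives $O(m^2)$, a factor of $m$ better than the paper's bound, since the $O(m^3)$ count is the one for affine planes and does not use centrality. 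It would correspondingly improve the estimate in Section~\ref{subsec:time_complexity}.
\end{itemize}
The paper's version is shorter and matches Algorithm~\ref{alg:HyperplaneArr}, which only enumerates full-dimensional regions. Your $S$ also contains patterns from lower-dimensional faces that the algorithm never produces, which is harmless for the existence statement.

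One caution about your final paragraph. It is not needed for the lemma, but it is not right as written.
\begin{itemize}
\item With a single generic pair $u,w$, the points $\pm d\pm\epsilon u\pm\epsilon^2 w$ avoid every plane $H_k$. So no lower-dimensional face is realized, contrary to your claim.
\item Worse, when several planes share a line, not even all regions are reached. For $a_1=e_1$, $a_2=e_2$ the only pair gives $d=e_3$. Your eight points then realize only the sign patterns $\pm(\mathrm{sgn}\,u_1,\mathrm{sgn}\,u_2)$ for $(a_1^T\eta,a_2^T\eta)$. So if $u_1u_2>0$, the vector $\eta=(1,-1,0)$ has no admissible bipartition in the resulting $S$.
\end{itemize}
To repair it, the perturbation directions must depend on the pair, for example $u\in H_i$ transverse to $H_i\cap H_j$ and $w=a_i$. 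You would then still need the closure argument to pass from full-dimensional regions to all $\eta$.
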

\begin{proof}
	First, we refer to a theorem stated in \cite{Ra20} which states that the planes $a_1^T\eta = 0, \cdots a_m^T \eta = 0$ divide $\mathbb{R}^3$ into $O(m^3)$ regions with non-empty interior. Next, we notice that each region corresponds to a convex cone bounded by the constraints $a_i^T \eta \leq 0$ or $a_i^T\eta \geq 0$ for each $i$. For each $i$, the choice $a_i^T \eta \leq 0$ or $a_i^T \eta \geq 0$ corresponds to choosing whether $i \in I_-$ or $I_+$. Let the set of all of these choices be $S$. More concretely, let the $O(m^3)$ regions be $R_1 \cdots R_M$. Each of these regions can be written as $R_j = \{x \in \mathbb{R}^3: a_i^T\eta (-1)^{s_{ij}} \leq 0\}$. Then for each $1 \leq j \leq M$, define $I_+^j = \{i:s_{ij} = 1\}$ and $I_-^j = \{i:s_{ij} = 0\}$. Then $S = \{(I_-^j, I_+^j):1 \leq j \leq M\}$.
\end{proof}
\noindent
The following algorithm computes all the regions given by $a_1 \cdots a_m$. 
\begin{algorithm}[H]
	\caption{Hyperplane Region Algorithm}\label{alg:HyperplaneArr}
	\begin{algorithmic}
		\Ensure $A = [a_1 \cdots a_m] \in M_{m\times3}(\mathbb{R})$
		\State $\textrm{Out} \gets [\mathbb{R}^n]$
		\Comment Vector of Polyhedrons representing regions
		\State $\textrm{Tmp} \gets []$
		\ForAll{$a_i \in A$}
		\ForAll{$P \in \textrm{Out}$}
		\State $P_1 \gets P\cap \{a_i^Tx \leq 0\}$
		\If {$P_1 \textrm{ has nonempty interior}$}
		\State $\textrm{append}(\textrm{Tmp},P_1)$
		\EndIf
		\State $P_2 \gets P\cap \{a_i^Tx \geq 0\}$
		\If {$P_2 \textrm{ has nonempty interior}$}
		\State $\textrm{append}(\textrm{Tmp},P_2)$
		\EndIf
		\EndFor
		\State $\textrm{Out} \gets \textrm{Tmp}$
		\State $\textrm{Tmp} \gets \textrm{[]}$
		\EndFor
		\State \Return $\textrm{Out}$
	\end{algorithmic}
\end{algorithm}
\noindent
In algorithm \ref{alg:HyperplaneArr}, the polytopes are represented by their half-space representation $P=\{Ax \leq b\}$. This makes finding the itersection between a given polytope $P$ and half-space $\{a_i^Tx \leq 0\}$ a constant time operation. In addition, testing if $P$ has nonempty interior reduces to finding the side length of the largest axis-aligned cube contained in $P$. This, in turn, reduces to solving a linear program, which can be done in polynomial time.\\\\
As a result, each iteration of the inner loop takes polynomial time in terms of the input. The total number of iterations of the inner loop is equal to the length of the array $\textrm{Out}$ at the time the loop starts. Suppose the inner loop starts at the $i$th iteration of the outer loop. Every element of the array $\textrm{Out}$ is a distinct region with nonempty interior bounded by the first $i-1$ elements of the input. By the proof of Lemma \ref{lem:hyperplanes}, the total number of these regions is at most $O((i-1)^3)=O(m^3)$. As a result, the number of iterations of the inner loop is always polynomial. Consequently, algorithm \ref{alg:HyperplaneArr} is a polynomial time algorithm.
\\
\subsection{Time Complexity Analysis of Algorithm \ref{alg:Nieuwland}}\label{subsec:time_complexity}
\hfill \\
Using Theorem \ref{thm:pass_through_simplified} and Lemma \ref{lem:hyperplanes}, we have reduced computing Nieuwland numbers to solving a polynomial number of QCQPs. All that remains is to upper-bound the time complexity of doing this. For simplicity's sake, we let the time complexity of solving a QCQP with $N$ variables and $M$ constraints be $\textrm{QCQP}(N,M)$. For each of the $O(m^3)$ QCQPs, we have 16 variables, namely $\eta, U, \delta$ and $\nu$. All that remains is to count the number of constraints for each QCQP.
\begin{itemize}
	\item $m$ constraints of the form $a_i^T\eta \leq 0$ or $a_i^T\eta \geq 0$.
	\item $1$ constraint $\lVert \eta \rVert^2=1$.
	\item $6$ constraints to specify that $U$ is a scaled orthogonal matrix.
	\item For each QCQP, there are at most $n\frac{m^2}{4}$ conditions of the form $(b_j-a_j^T(U v_k+\delta))(a_i^T\eta) \geq (b_i-a_i^T(U v_k+\delta))(a_j^T\eta)$.
\end{itemize}
This results in $n\frac{m^2}{4}+m+7$ constraints per QCQP, giving a time complexity of
\begin{align*}
	O \left(m^3 \textrm{QCQP}\left(16,n\frac{m^2}{4}+m+7\right) \right) \\
\end{align*}
To prove that this time complexity is polynomial, we need to upper-bound $\textrm{QCQP}\left(16,n\frac{m^2}{4}+m+7\right)$ with a polynomial. We can accomplish this using Quantifier Elimination.\\\\
First, consider the following theorem by Tarski and Seidenberg \cite{Re92}. 
\begin{thm}[Tarski-Seidenberg \cite{Re92}]
    Let $\psi$ be a logical formula of one of the following forms
    \begin{itemize}
    	\item $p(x) = 0$ or $p(x) < 0$ for $x \in \mathbb{R}^n$ and $p$ some multivariate polynomial
    	\item combinations of the above using conjunction, disjunction, and negation
    	\item combinations of the above using the logical quantifiers $\exists$ and $\forall$
    \end{itemize}
    Then there exists a formula $\psi'$ that is logically equivalent to $\psi$ and contains no quantifiers.
\end{thm}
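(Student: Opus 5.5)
The plan is to prove quantifier elimination over the real closed field $\mathbb{R}$ by induction on the number of quantifiers. Because $\forall x\,\psi$ is equivalent to $\neg\exists x\,\neg\psi$, it suffices to treat formulas whose quantifiers are all existential. We may also assume the formula is in prenex form, since any formula of the given shape can be rewritten that way by renaming bound variables. Working from the innermost quantifier outward, the whole statement then reduces to one core step. We must show that a formula $\exists y\,\phi(x,y)$, with $\phi$ quantifier-free in the variables $x\in\mathbb{R}^n$ and $y\in\mathbb{R}$, is equivalent to some quantifier-free formula $\phi'(x)$.

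To carry out the core step, first write $\phi$ in disjunctive normal form. Since $\exists$ distributes over $\vee$, we only need to handle a single conjunction of atoms $p_1(x,y)\,\square_1\,0,\dots,p_s(x,y)\,\square_s\,0$, where each $\square_\ell$ is one of $=,<,>$ (negations of atoms are turned into such atoms or disjunctions of them). Regard each $p_\ell$ as a polynomial in $y$ whose coefficients are polynomials in $x$.

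The key observation is that, for fixed $x$, whether a suitable $y$ exists depends only on the sign conditions of finitely many auxiliary polynomials in $x$. These auxiliary polynomials are the leading coefficients, the subresultant (discriminant-type) coefficients of the $p_\ell$ and of their pairwise products and derivatives, and so on. Once the signs of these finitely many polynomials are fixed, the number of real roots of each $p_\ell(x,\cdot)$ and their relative order are determined. Hence the sign pattern of the whole family $(p_\ell(x,y))_\ell$ as $y$ runs over $\mathbb{R}$ is also determined.

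The cleanest route to this observation is the Sturm--Tarski / Thom-type sign-determination argument. Fix $x$ and consider the real roots of $\prod_\ell p_\ell(x,\cdot)$. These roots cut $\mathbb{R}$ into intervals, and on each interval, as well as at each root, every $p_\ell$ has constant sign. The Tarski query $\mathrm{TaQ}(q,p)=\sum_{p(y)=0}\operatorname{sign} q(y)$ can be computed from signed subresultant sequences. The entries of these sequences are polynomials in $x$, and the query is a fixed function of their signs. So for each of the finitely many possible sign patterns of these $x$-polynomials, the truth of $\exists y$ is a fixed Boolean value. Taking $\phi'(x)$ to be the disjunction of the conditions (which are quantifier-free conjunctions) describing those sign patterns for which the value is ``true'' gives the required formula. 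Degenerate values of $x$, where leading coefficients vanish, are handled by splitting further into cases according to which coefficients are zero. This splitting terminates because degrees only decrease.

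I expect the main obstacle to be this uniformity in $x$. Proving that the signs of finitely many polynomials in the parameters control the real-root configuration requires carefully developing subresultants and Sturm sequences with parameters, including the degree-drop cases. Since the paper only invokes this as a classical result, I would cite \cite{Re92} (or a standard text) for that step rather than redo the subresultant theory. The induction on quantifiers and the Boolean bookkeeping are then routine.
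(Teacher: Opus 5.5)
The paper gives no proof of this statement. It quotes Tarski--Seidenberg as a black box from \cite{Re92}, and what it actually relies on afterwards is Renegar's effective bound $\mathrm{QE}(N,M,d)=O((Md)^{O(N)})$. Your outline is a genuinely different route: the classical Tarski argument in its modern Sturm--Tarski form. As a proof of the existence statement it is sound. The uniformity in $x$ comes from reducing to $\exists y$ over a conjunction of sign conditions and expressing Tarski queries through signed subresultant coefficients (which are polynomials in $x$). You then recover from those queries which sign conditions on $(p_\ell)_\ell$ are realized, and split on vanishing leading coefficients.

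One step should be made explicit rather than left to ``and derivatives''. A Tarski query $\mathrm{TaQ}(q,p)$ only sees roots of $p$, so some sign conditions are realized only on an open interval between consecutive roots of $\prod_\ell p_\ell$. You capture these by adding $(\prod_\ell p_\ell)'$, which by Rolle has a root in each bounded such interval. The two unbounded intervals are read off from the degrees and leading-coefficient signs.

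Also, the step you defer is not what \cite{Re92} provides. Parametric signed subresultants and sign determination are developed in texts such as Basu--Pollack--Roy or Bochnak--Coste--Roy. Renegar instead treats a whole block of quantified variables at once, via critical-point and $u$-resultant constructions.

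That difference is what each route buys. Eliminating one variable at a time, even in its most economical Collins-type form, gives bounds doubly exponential in the number of eliminated variables. This is harmless for the bare existence of $\psi'$, but it would not justify the single-exponential $(Md)^{O(N)}$ figure the paper uses in its complexity analysis. Renegar's block elimination gives that figure, at the cost of much heavier machinery.
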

\noindent
Formally, the set $\{x \in \mathbb{R}^n:\psi(x)=\textrm{true}\}$ is equal to the set $\{x \in \mathbb{R}^n:\psi'(x)=\textrm{true}\}$. As an example, consider the formula $\psi(a,b,c):=(\exists x: ax^2+bx+c=0)$. The equivalent formula is $\psi'(a,b,c):=\left((b^2-4ac \geq 0) \vee (a=0 \wedge b \neq 0) \vee (a = 0 \wedge b = 0 \wedge c = 0)\right)$.\\\\
For a given $\psi$, the process of computing $\psi'$ is called Quantifier Elimination. All algorithms for doing this are incredibly slow, so they will only be used to prove theoretical bounds. We use the time complexity given by Renegar in \cite{Re92}:
$$QE(N,M,d) = O\left( (Md)^{O(N)}\right)$$
\noindent
where $M$ is the number of polynomials, $d$ is the maximum degree, and $N$ is the number of variables being eliminated. (Note, this result is typically stated as depending on the bit-length of the coefficients, but we treat those as constants). We will use this bound to give a worst case upper bound on the time complexity of solving a QCQP. In practice, this will be a gross overestimate.\\\\
Consider an arbitrary QCQP.
\begin{gather*}
	\min \frac{1}{2}x^TP_0x+q_0x \quad s.t.\\
	\frac{1}{2}x^TP_ix+q_ix+r_i \leq 0 \quad \forall i
\end{gather*}
We introduce a new variable $t$ and the equation $t = \frac{1}{2}x^TP_0x+q_0x$. Then consider the set
\begin{align*}
	\left\{t \in \mathbb{R}, \exists x \in \mathbb{R}^n: t = \frac{1}{2}x^TP_0x+q_0x, \frac{1}{2}x^TP_ix+q_ix+r_i \leq 0 \quad \forall i \right\}. 
\end{align*}
By the Tarski-Seidenberg theorem, we have a quantifier free formula $\psi'$ such that
\begin{align*}
	\left\{t \in \mathbb{R}, \exists x \in \mathbb{R}^n: t = \frac{1}{2}x^TP_0x+q_0x, \frac{1}{2}x^TP_ix+q_ix+r_i \leq 0 \quad \forall i \right\} = \{t\in \mathbb{R}: \psi'(t)\}.
\end{align*}
Since $\psi'(t)$ is a system of equations and inequalities in one variable, finding the optimal value is achievable in polynomial time. However, in practice, ``the minimum $t \in \mathbb{R}$ that satisfies $\psi'(t)$'' is an acceptable answer, as it avoids imprecision. This allows us to optimize a QCQP with $n$ variables and $m$ constraints in time complexity.
\begin{gather*}
	QCQP(N,M)=QE(N+1,M+1,2)=O\left((2M+2)^{O(N)} \right)
\end{gather*}
This gives us the following as our final time complexity. 
\begin{gather*}
	O \left(m^3 \textrm{QCQP}\left(16,n\frac{m^2}{4}+m+7\right) \right)\\
	= O\left(m^3\textrm{QE}\left(17,n\frac{m^2}{4}+m+8, 2\right) \right)\\
	= O\left(m^3\left(n\frac{m^2}{2}+2m+16\right)^{O(1)}\right)
\end{gather*}
which is polynomial.

\begin{thm}
	There exists an algorithm that can determine the Nieuwland number of a convex polytope $P \subseteq \mathbb{R}^3$ given by its vertices in polynomial time.
\end{thm}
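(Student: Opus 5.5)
Given the vertices $v_1,\dots,v_n$ of $P$, the algorithm first computes a halfspace representation $P=\{Ax\le b\}$ with $m$ rows. By Lemma \ref{lem:max_number} the supremum in Definition \ref{dfn:Nieuwland 1} is attained, so $\nu_P$ is a genuine maximum of a feasibility problem in $(\nu,U,\eta,\delta)$. For a polytope in $\mathbb{R}^3$ we have $m=O(n)$ by Euler's formula, and the facets can be found in polynomial time, for instance by testing each triple of vertices with a linear program or by using a convex hull algorithm. Translating so that the centroid of the vertices lies in the interior is not needed, because the representation with a general right-hand side $b$ suffices for Theorem \ref{thm:pass_through_simplified}.

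The case $\eta=0$ forces $\nu\le 1$ by the volume argument in the proof of Lemma \ref{lem:max_number}, and $\nu=1$ is always achievable. So we may restrict to $\eta\neq 0$ and normalize with $\|\eta\|^2=1$. By Theorem \ref{thm:pass_through_simplified}, $\nu P$ passes through $P$ with certificate $(U,\eta,\delta)$ exactly when some bipartition satisfies conditions (1) and (2). The bipartition constructed in the forward direction of that proof is determined by the sign pattern of $(a_i^T\eta)_i$. By Lemma \ref{lem:hyperplanes}, every $\eta$ is compatible with some member of a family $S$ of $O(m^3)$ bipartitions, and Algorithm \ref{alg:HyperplaneArr} produces $S$ in polynomial time. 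Every feasible point of a bipartition program is also a valid certificate, by the reverse direction of Theorem \ref{thm:pass_through_simplified}. It follows that
\[
\nu_P=\max\Bigl(1,\ \max_{(I_-,I_+)\in S}\nu^*(I_-,I_+)\Bigr),
\]
where $\nu^*(I_-,I_+)$ is the optimal value of the Nieuwland QCQP of Section \ref{subsec:nieuwland_qcqp} restricted to that bipartition.

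Each such QCQP has $16$ variables and $nm^2/4+m+7$ polynomial constraints of degree at most $2$; the rotation constraints are written with $\nu U$ absorbed into $U$, which keeps the degree at $2$. By the quantifier elimination reduction above (Tarski–Seidenberg with Renegar's bound), each optimal value is obtained in time $(nm^2)^{O(1)}$. More precisely, we obtain a quantifier-free formula in $t=\nu$ whose largest satisfying value is $\nu^*$, given as an algebraic number, for example by isolating intervals of roots of univariate polynomials. Comparing the $O(m^3)$ algebraic numbers is also polynomial, using root isolation and sign determination. The total running time is $O(m^3)\cdot(nm^2)^{O(1)}=n^{O(1)}$.

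The main obstacle is rigor in the optimization step. First, the feasible set in $\nu$ must have a maximum rather than only a supremum. This holds by Lemma \ref{lem:max_number} overall, and for each fixed bipartition the feasible set is closed. Boundedness needs care, since $\delta$ must be shown bounded as in that lemma, and $\nu$ is bounded by a diameter comparison. Second, Renegar's bound depends on the bit-lengths of the coefficients. Because the constant in the exponent is fixed (there are $17$ variables), the running time stays polynomial in $n$ and in the input bit-size; the only requirement is that the facet coefficients produced from the vertices have polynomially bounded bit-length, which holds because they are determinants of vertex coordinates.
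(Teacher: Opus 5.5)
Your proposal is correct and follows the paper's own route, which is Algorithm~\ref{alg:Nieuwland}. The paper enumerates the $O(m^3)$ sign-compatible bipartitions from Lemma~\ref{lem:hyperplanes}, solves one 16-variable degree-2 program per bipartition via Theorem~\ref{thm:pass_through_simplified}, and bounds each solve by Renegar's quantifier elimination, exactly as you do. Your extra details (computing the facets from the vertices, the $\eta=0$ case, bit-lengths, algebraic output) are refinements the paper leaves implicit. The one step worth writing out is that the forward direction of Theorem~\ref{thm:pass_through_simplified} holds for \emph{every} bipartition compatible with $\eta$, not just the one constructed in its proof. This follows by adding the $i$-th and $j$-th constraints with weights $-a_j^T\eta\ge 0$ and $a_i^T\eta\ge 0$, which eliminates $z_k$.
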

\begin{proof}
	See Algorithm \ref{alg:Nieuwland}.
\end{proof}

\begin{algorithm}[H]
	\caption{Nieuwland Number Algorithm}\label{alg:Nieuwland}
	\begin{algorithmic}
		\Ensure $P = \textrm{conv}(v_1 \cdots v_n)=\{Ax \leq b\}$
		\State Regions = \Call{HyperplaneArr}{$A$}
		\State $\textrm{Max} = 0$
		\ForAll{$R \in \textrm{Regions}$}
		\State Compute $(I_+, I_-)$ from $R$
		\State $\textrm{Model} \gets \textrm{QCQP}(\eta, U, \delta, \nu)$
		\ForAll{$i \in I_+$}
		\State Model.addConstraint($a_i^T \eta \geq 0$)
		\EndFor
		\ForAll{$i \in I_-$}
		\State Model.addConstraint($a_i^T \eta \leq 0$)
		\EndFor
        \State Model.addConstraint($\lVert \eta \rVert^2 = 1$)
		\State Model.addConstraint($\lVert U_1 \rVert^2 = \nu^2$)
        \State Model.addConstraint($\lVert U_2 \rVert^2 = \nu^2$)
        \State Model.addConstraint($U_1 \cdot U_2 = 0$)
        \State Model.addConstraint($\nu U_3 = U_1 \times U_2$)
		\For{$k = 1 \cdots n$}
		\ForAll{$i \in I_+$}
		\ForAll{$j \in I_-$}
		\State Model.addConstraint($(b_j-a_j^T(Uv_k+\delta))(a_i^T\eta) \geq (b_i-a_i^T(Uv_k+\delta))(a_j^T\eta)$)
		\EndFor
		\EndFor
		\EndFor
		\State $\textrm{Max} = \textrm{max(Max,Model.optimize())}$
		\EndFor
		\State \Return Max
	\end{algorithmic}
\end{algorithm}

\section{Conclusion and Future Work}\label{sec:conclusion}

This work mainly focused on answering the question raised by Steininger and Yurkevich in \cite{StYu23} about the cube and the octahedron. However, many of the ideas here were originally intended to address a much larger question. Do all point-symmetric convex polytopes have the same Nieuwland number as their polar dual? The ideas presented in this paper provide a starting point for exploring this and many other questions related to Nieuwland numbers and Rupert's property.\\\\
There are also numerous shapes for which the exact Nieuwland number is unknown. We believe that some combination of the ideas presented here and in \cite{StYu25} will be able to solve many of these problems. In particular, having a way to solve these QCQPs exactly could allow us to finally determine whether the rhombicosidodecahedron has Rupert's property.

\section{Appendix}\label{sec:appendix}

In this appendix, we will present the proof that our definition of Nieuwland numbers are equivalent to those given in \cite{StYu23}. We start with the following proposition.
\\
\begin{prop}
Let $\eta\in S^2$ and $U\in SO(3)$. Then, $$\textup{proj}_\eta\circ U=U\circ \textup{proj}_{U^{-1}\eta}.$$
\begin{proof}
    To prove equality of linear operators, it suffices to check they have the same output on any input. Let $x\in\mathbb{R}^3$.
    $$\textrm{proj}_\eta\circ(Ux)$$
    $$=Ux-\langle Ux,\eta\rangle\eta$$
    $$=Ux-\langle x,U^T\eta\rangle\eta$$
    $$=U(x-\langle x,U^{-1}\eta\rangle U^{-1}\eta)$$
    $$=U(\textrm{proj}_{U^{-1}\eta}x).$$
\end{proof}
\end{prop}
\begin{lem} \label{lem:pass_through_equiv}
    There exist $(x,y)\in\mathbb{R}^2,\theta_1,\theta_2,\alpha\in[0,2\pi)$ and $\varphi_1,\varphi_2\in[0,\pi]$ such that $$T_{x,y}\circ R_\alpha\circ M_{\theta_1,\phi_1}(P)\subseteq M_{\theta_2,\phi_2}(Q)$$ if and only if $P$ passes through $Q$.
\begin{proof}
    We begin by noting that $M_{\theta,\phi}=\begin{bmatrix}
		-\sin(\theta) & \cos(\theta) & 0\\
		-\cos(\theta)\cos(\phi) & -\sin(\theta)\cos(\phi) & \sin(\phi)
		\end{bmatrix}$ consists of 2 rows of a rotation matrix $U_{\theta,\phi}$ where the third row is given by $X(\theta,\phi)=(\cos(\theta)\sin(\phi),\sin(\theta)\sin(\phi),\cos(\phi))$.
\\\\
$(\Longrightarrow)$
\\\\
Let $(x,y)\in\mathbb{R}^2,\theta_1,\theta_2,\alpha\in[0,2\pi)$ and $\varphi_1,\varphi_2\in[0,\pi]$ such that $$T_{x,y}\circ R_\alpha\circ M_{\theta_1,\phi_1}(P)\subseteq M_{\theta_2,\phi_2}(Q).$$ Let 
$A=\begin{bmatrix} 1 & 0 & 0 \\ 0 & 1 & 0 \end{bmatrix}$. Then, $M_{\theta,\phi}=AU_{\theta,\phi}$. Note that $A^TA=\begin{bmatrix} 1 & 0 & 0\\ 0 & 1 & 0 \\ 0& 0 & 0 \end{bmatrix}=\textrm{proj}_{e_3}.$ We now quickly explore how $A^T$ interacts with $T_{x,y}$ and $R_\alpha$. First, we show $A^T\circ T_{x,y}=T_{x,y,0}\circ A^T$ where $T_{x,y,0}$ is the translation by $(x,y,0)^T$ in $\mathbb{R}^3$. Let $v\in\mathbb{R}^2$. Then,
$$(A^T\circ T_{x,y})(v)=A^T(v_1+x,v_2+y)=(v_1+x,v_2+y,0)$$
$$=T_{x,y,0}(v_1,v_2,0)=(T_{x,y,0}\circ A^T)(v).$$
Also, since $(x,y,0)$ lines on the plane orthogonal to $e_3$, we have $T_{x,y,0}\circ \textrm{proj}_{e_3}=\textrm{proj}_{e_3}\circ T_{x,y,0}$.
Now we show that $A^T\circ R_\alpha=R_\alpha'\circ A^T$ where $R_\alpha'=\begin{bmatrix} R_\alpha & 0 \\ 0 & 1 \end{bmatrix}$.
$$A^T\circ R_\alpha=\begin{bmatrix} R_\alpha \\ 0\end{bmatrix},$$
$$R_\alpha '\circ A^T=\begin{bmatrix} R_\alpha \\ 0\end{bmatrix}=A^T\circ R_\alpha.$$
A similar calculation will reveal that $R_\alpha'\circ \textrm{proj}_{e_3}=\textrm{proj}_{e_3}\circ R_\alpha'.$
The rest of this proof is simple algebra. Since $T_{x,y}\circ R_\alpha\circ M_{\theta_1,\phi_1}(P)\subseteq M_{\theta_2,\phi_2}(Q)$ we have $$A^T\circ T_{x,y}\circ R_\alpha\circ M_{\theta_1,\phi_1}(P)\subseteq A^T\circ M_{\theta_2,\phi_2}(Q).$$ Applying the facts above gives $$T_{x,y,0}\circ R'_\alpha\circ A^T\circ A\circ U_{\theta_1,\phi_1}(P)\subseteq A^T\circ A\circ U_{\theta_2,\varphi_2}(Q).$$ Since $A^T\circ A=\textrm{proj}_{e_3}$ we have $$T_{x,y,0}\circ R'_\alpha\circ \textrm{proj}_{e_3}\circ U_{\theta_1,\phi_1}(P)\subseteq \textrm{proj}_{e_3} \circ U_{\theta_2,\varphi_2}(Q).$$ Commuting $R_\alpha'$ then $T_{x,y,0}$ with $\textrm{proj}_{e_3}$ gives $$\textrm{proj}_{e_3}\circ T_{x,y,0}\circ R'_\alpha \circ U_{\theta_1,\phi_1}(P)\subseteq \textrm{proj}_{e_3} \circ U_{\theta_2,\varphi_2}(Q).$$ Recalling the previous proposition gives $$U_{\theta_2,\phi_2}\circ \textrm{proj}_{U_{\theta_2,\phi_2}^{-1}e_3}\circ U_{\theta_2,\phi_2}^{-1}\circ T_{x,y,0}\circ R'_\alpha \circ U_{\theta_1,\phi_1}(P)\subseteq U_{\theta_2,\phi_2}\circ \textrm{proj}_{U_{\theta_2,\phi_2}^{-1}e_3} (Q).$$ By applying $U_{\theta_2,\phi_2}^-1$ to both sides and commuting $U_{\theta_2,\phi_2}^{-1}$ and $T_{x,y,0}$ we have
$$\textrm{proj}_{U_{\theta_2,\phi_2}^{-1}e_3}\circ T_{U_{\theta_2,\phi_2}^{-1}(x,y,0)}\circ U_{\theta_2,\phi_2}^{-1}\circ R'_\alpha \circ U_{\theta_1,\phi_1}(P)\subseteq \textrm{proj}_{U_{\theta_2,\phi_2}^{-1}e_3} (Q).$$ Finally, we set $\eta=U_{\theta_2,\phi_2}^{-1}e_3,U=U_{\theta_2,\phi_2}^{-1}\circ R'_\alpha \circ U_{\theta_1,\phi_1},\delta=U_{\theta_2,\phi_2}^{-1}(x,y,0)$ which gives $\textrm{proj}_\eta(UP+\delta)\subseteq \textrm{proj}_\eta(Q)$ as desired.
\\\\
($\Longleftarrow$)
\\\\
Let $P$ pass through $Q$ with certificate $(U,\eta,\delta)$. We define $U_{\theta,\phi}$ as we did above. We will try to use the equations for $(U,\eta,\delta)$ given above to try and derive appropriate choices for $(\theta_1,\theta_2,\phi_1,\phi_2,\alpha,x,y)$. We start by finding a choice for $(\theta_2,\phi_2)$. Above we chose $\eta=U^{-1}_{\theta_2,\phi_2}e_3$. We note that $X(\theta,\phi)$ parametrizes the sphere. Thus, there exists $\theta_2\in[0,2\pi),\phi_2\in[0,\pi]$ such that $\eta=X(\theta_2,\phi_2)$. Since $U^{-1}_{\theta_2,\phi_2}=U^T_{\theta_2,\phi_2}$ and the third row of $U_{\theta_2,\phi_2}$ is $X(\theta_2,\phi_2)$ this gives $\eta=U^{-1}_{\theta_2,\phi_2}e_3$.
\\\\
Next we find an appropriate choice for $(x,y)$. Above we had $\delta=U^{-1}_{\theta_2,\phi_2}(x,y,0)$. We can't guarantee that $U_{\theta_2,\phi_2}(\delta)=(x,y,0)$ but we can choose $x,y$ to be the first and second coordinates of $U_{\theta_2,\phi_2}(\delta)$.
\\\\
Now we will choose $(\theta_1,\phi_1,\alpha)$ such that $U=U_{\theta_2,\phi_2}^{-1}\circ R'_\alpha \circ U_{\theta_1,\phi_1}$. By rearranging we have $$U_{\theta_2,\phi_2}\circ U\circ  U_{\theta_1,\phi_1}^{-1}=R'_\alpha.$$ Recall that $R'_\alpha$ is a block diagonal matrix $R'_\alpha=\begin{bmatrix} R_\alpha & 0 \\ 0 & 1 \end{bmatrix}$. In particular, $R'_\alpha$ is a special orthogonal matrix that fixes $e_3$. Such matrices are all of the form $\begin{bmatrix} R & 0 \\ 0 & 1 \end{bmatrix}$ for some 2 dimensional rotation matrix $R$. Since all such $R$ are parametrized by $R_\alpha, \alpha\in[0,2\pi)$, it suffices to find $\theta_1,\phi_1$ such that $U_{\theta_2,\phi_2}\circ U\circ  U_{\theta_1,\phi_1}^{-1}$ fixes $e_3$.
\\\\
Let $x=(U^{-1}\circ U_{\theta_2,\phi_2}^{-1})e_3$. Since orthogonal matrices are isometries, $x\in S^2$. Thus, we can find $\theta_1\in [0,2\pi), \phi_1\in[0,\pi]$ such that $X(\theta_1,\phi_1)=x$. This gives $$U_{\theta_1,\phi_1}^Te_3=X(\theta_1,\phi_1)=x=(U^{-1}\circ U_{\theta_2,\phi_2}^{-1})e_3.$$ Rearranging and noticing that $U_{\theta_1,\phi_1}^T=U_{\theta_1,\phi_1}^{-1}$ gives $$(U_{\theta_2,\phi_2}\circ U\circ  U_{\theta_1,\phi_1}^{-1})e_3=e_3$$ as we wanted. Thus, we can find $\theta_1,\alpha\in[0,2\pi),\phi_1\in[0,\pi]$ such that $U=U_{\theta_2,\phi_2}^{-1}\circ R'_\alpha \circ U_{\theta_1,\phi_1}$.
\\\\
All that remains is to run through the proof of the forwards direction but in reverse. We have $\textrm{proj}_\eta(UP+\delta)\subseteq \textrm{proj}_\eta(Q)$. Expanding our equations for $(U,\eta)$ gives $$\textrm{proj}_{U_{\theta_2,\phi_2}^{-1}e_3}(U_{\theta_2,\phi_2}^{-1}\circ R'_\alpha \circ U_{\theta_1,\phi_1}(P)+\delta)\subseteq \textrm{proj}_{U_{\theta_2,\phi_2}^{-1}e_3}(Q).$$ By the previous proposition we have $$U_{\theta_2,\phi_2}^{-1}\circ \textrm{proj}_{e_3}( R'_\alpha \circ U_{\theta_1,\phi_1}(P)+U_{\theta_2,\phi_2}\delta)\subseteq U_{\theta_2,\phi_2}^{-1}\circ \textrm{proj}_{e_3}(U_{\theta_2,\phi_2}Q).$$ Applying the rotation $U_{\theta_2,\phi_2}$ to both sides gives $$\textrm{proj}_{e_3}( R'_\alpha \circ U_{\theta_1,\phi_1}(P)+U_{\theta_2,\phi_2}\delta)\subseteq \textrm{proj}_{e_3}(U_{\theta_2,\phi_2}Q).$$ Since $(x,y)$ are the first 2 coordinates of $U_{\theta_2,\phi_2}\delta$ and $R_\alpha'$ commutes with $\textrm{proj}_{e_3}$ we have $$R'_\alpha \circ \textrm{proj}_{e_3}(U_{\theta_1,\phi_1}(P))+(x,y,0)\subseteq \textrm{proj}_{e_3}(U_{\theta_2,\phi_2}Q).$$ Expanding $\textrm{proj}_{e_3}=A^T\circ A$ and recalling $A^T(x,y)=(x,y,0),R'_\alpha\circ A^T=A^T\circ R_\alpha ,A\circ U_{\theta,\phi}=M_{\theta,\phi}$ we obtain $$A^T(R_\alpha \circ M_{\theta_1,\phi_1}(P)+(x,y))\subseteq A^T(M_{\theta_2,\phi_2}(Q)).$$ Finally, we note that $A^T$ is injective which gives $$(T_{x,y}\circ R_\alpha\circ M_{\theta_1,\phi_1})(P)\subseteq M_{\theta_2,\phi_2}(Q),$$ as desired.
\end{proof}
\end{lem}
\noindent
With Lemma \ref{lem:pass_through_equiv} we are now able to give a proof of Proposition \ref{prop: appendix}.
\begin{proof}
Let $\nu_1$ be the Nieuwland number of $P$ as given by Definition \ref{dfn:Nieuwland 1} and $\nu_2$ be the Nieuwland number of $P$ as given by Definition \ref{dfn: Nieuwland 2}. We start by noting that if
$$(T_{x,y}\circ R_\alpha\circ M_{\theta_1,\phi_1})(\nu P)\subseteq \textup{int}(M_{\theta_2,\phi_2}P)$$
then,
$$(T_{x,y}\circ R_\alpha\circ M_{\theta_1,\phi_1})(\nu P)\subseteq M_{\theta_2,\phi_2}P.$$ By Lemma \ref{lem:pass_through_equiv}, for any such $\nu$, $\nu P$ passes through $\nu$. Since $\nu_2$ is the supremum over such $\nu$ and the set of all such $\nu$ is a subset of the $\nu$ for which $\nu P$ passes through $P$, we have $\nu_1 \geq \nu_2$.
\\\\
Similarly, since $\nu_1 P$ passes through $P$, there exist $x,y\in\mathbb{R},\theta_1,\theta_2,\alpha\in[0,2\pi),\phi_1,\phi_2\in[0,\pi]$ such that $$(T_{x,y}\circ R_\alpha\circ M_{\theta_1,\phi_1})(\nu_1 P)\subseteq M_{\theta_2,\phi_2}P.$$ Fix $0< \epsilon < 1$ and let $u\in \textrm{int}(P)$. Then,
$$(T_{x,y}\circ R_\alpha\circ M_{\theta_1,\phi_1})(\nu_1 (P+u-u))\subseteq M_{\theta_2,\phi_2}(P+u-u).$$ Rearranging yields, $$(T_{x,y}\circ R_\alpha\circ M_{\theta_1,\phi_1})(\nu_1 (P-u))+(R_\alpha\circ M_{\theta_1,\phi_1})(\nu_1 u)-M_{\theta_2,\phi_2}(u)\subseteq M_{\theta_2,\phi_2}(P-u),$$ which can be rewritten as $$(T_{(x,y)+(R_\alpha\circ M_{\theta_1,\phi_1})(\nu_1 u)-M_{\theta_2,\phi_2}(u)}\circ R_\alpha\circ M_{\theta_1,\phi_1})(\nu_1 (P-u))\subseteq M_{\theta_2,\phi_2}(P-u).$$ We note that since $u\in \textrm{int}(P)$ we have $0\in \textrm{int}(P-u)$. Thus, for our fixed $\epsilon$ we have
$$\varepsilon(T_{(x,y)+(R_\alpha\circ M_{\theta_1,\phi_1})(\nu_1 u)-M_{\theta_2,\phi_2}(u)}\circ R_\alpha\circ M_{\theta_1,\phi_1})(\nu_1 (P-u))\subseteq \textrm{int}(M_{\theta_2,\phi_2}(P-u)).$$ Commuting the scalar $\epsilon$ gives
$$(T_{\epsilon((x,y)+(R_\alpha\circ M_{\theta_1,\phi_1})(\nu_1 u)-M_{\theta_2,\phi_2}(u))}\circ R_\alpha\circ M_{\theta_1,\phi_1})(\epsilon\nu_1 (P-u))\subseteq \textrm{int}(M_{\theta_2,\phi_2}(P-u)).$$ We can again rearrange to obtain $$(T_{\epsilon((x,y)+(R_\alpha\circ M_{\theta_1,\phi_1})(\nu_1 u)-M_{\theta_2,\phi_2}(u))}\circ R_\alpha\circ M_{\theta_1,\phi_1})(\epsilon\nu_1 P)-(R_\alpha\circ M_{\theta_1,\phi_1})(\epsilon\nu_1 u)+M_{\theta_2,\phi_2}(u)\subseteq \textrm{int}(M_{\theta_2,\phi_2}P),$$ which can be rewritten as $$(T_{\epsilon(x,y)+(1-\varepsilon)M_{\theta_2,\phi_2}(u)}\circ R_\alpha\circ M_{\theta_1,\phi_1})(\epsilon\nu_1 P)\subseteq \textrm{int}(M_{\theta_2,\phi_2}P).$$ This implies that $\nu_2\geq \varepsilon \nu_1$ for all choices of $0<\epsilon<1$. Taking $\varepsilon\to1^-$ gives $\nu_2\geq\nu_1$. Thus, $\nu_1=\nu_2$ as desired.
\end{proof}

\bibliographystyle{plain}
\bibliography{bib} 
                            
\end{document}